\newenvironment{claimproof}{\par
	\pushQED{\hfill$\diamondsuit$}%
	\normalfont \topsep6\p@\@plus6\p@\relax
	\trivlist
	\item[]\ignorespaces
}{%
	\popQED\endtrivlist\@endpefalse
}
\newcommand{\N}{{\ensuremath{\mathbb{N}}}}
\newcommand{\Q}{{\ensuremath{\mathbb{Q}}}}
\newcommand{\C}{{\ensuremath{\mathbb{C}}}}
\newcommand{\F}{{\ensuremath{\mathbb{F}}}}
\newcommand{\stkout}[1]{\ifmmode\text{\sout{\ensuremath{#1}}}\else\sout{#1}\fi}
\DeclareMathOperator{\supp}{supp}
\DeclareMathOperator{\diag}{diag}
\DeclareMathOperator{\Tr}{Tr}
\DeclareMathOperator{\id}{id}
\DeclareMathOperator{\idem}{Idem}
\DeclareMathOperator{\spn}{span}
\newcommand{\ca}[1]{\ensuremath{\mathcal{#1}}}
\newcommand{\abs}[1]{\ensuremath{ {\left| #1 \right|} }}
\newtheorem{proposition}{Proposition}[section]
\newtheorem{lemma}[proposition]{Lemma}
\newtheorem{theorem}[proposition]{Theorem}
\newtheorem{claim}{Claim}
\theoremstyle{definition}
\newtheorem{example}[proposition]{Example}
\newtheorem{remark}[proposition]{Remark}
\numberwithin{equation}{section}
\newlength{\leftstackrelawd}
\newlength{\leftstackrelbwd}
\def\leftstackrel#1#2{\settowidth{\leftstackrelawd}%
	{${{}^{#1}}$}\settowidth{\leftstackrelbwd}{$#2$}%
	\addtolength{\leftstackrelawd}{-\leftstackrelbwd}%
	\leavevmode\ifthenelse{\lengthtest{\leftstackrelawd>0pt}}%
	{\kern-.5\leftstackrelawd}{}\mathrel{\mathop{#2}\limits^{#1}}}
\newcommand{\tripprox}{\mathrel{\setbox0\hbox{$\approx$}%
		\mbox{\makebox[0pt][l]{\raisebox{0.48\ht0}{$\approx$}}$\approx$}}}
\begin{document}
	
	\title[Multiplicative and Jordan multiplicative maps on structural matrix algebras]{Multiplicative and Jordan multiplicative maps on structural matrix algebras}
	
	\author{Ilja Gogi\'{c}, Mateo Toma\v{s}evi\'{c}}
	
	\address{I.~Gogi\'c, Department of Mathematics, Faculty of Science, University of Zagreb, Bijeni\v{c}ka 30, 10000 Zagreb, Croatia}
	\email{ilja@math.hr}
	
	\address{M.~Toma\v{s}evi\'c, Department of Mathematics, Faculty of Science, University of Zagreb, Bijeni\v{c}ka 30, 10000 Zagreb, Croatia}
	\email{mateo.tomasevic@math.hr}
	
	%\thanks{We thank...}
	
	\keywords{structural matrix algebra, ring homomorphism, Jordan homomorphism, multiplicative map, Jordan multiplicative map}

	\subjclass[2020]{47B49, 16S50, 16W20, 20M25}
	
	\date{\today}
	
	\maketitle
	
	\begin{abstract}
		Let $M_n$ denote the algebra  of $n \times n$ complex matrices and let $\mathcal{A}\subseteq M_n$ be an arbitrary structural matrix algebra, i.e.\ a  subalgebra of $M_n$ that contains all diagonal matrices. We consider injective maps $\phi : \mathcal{A}\to M_n$ that satisfy the condition
		$$
		\phi(X \bullet Y) = \phi(X) \bullet \phi(Y), \quad \text{for all } X,Y \in \mathcal{A},
		$$ 
		where $\bullet$ is either the standard matrix multiplication $(X,Y)\mapsto XY$,  the Jordan product $(X,Y) \mapsto XY+YX$, or the normalized Jordan product $(X,Y) \mapsto \frac{1}{2}(XY+YX)$. We show that all such maps $\phi$ are automatically additive if and only if $\mathcal{A}$ does not contain a central rank-one idempotent. Moreover, in this case, we fully characterize the form of these maps. 
	\end{abstract}
	
	\section{Introduction}
	The interplay between the multiplicative and the additive structure of rings and algebras has been a topic of considerable interest among mathematicians. A classical result by Martindale \cite[Corollary]{Martindale} asserts that any bijective multiplicative map from a prime ring  containing a nontrivial idempotent onto an arbitrary ring  must be additive and, consequently, a ring isomorphism. In the context of matrix rings  $M_n(\ca{R})$ over a principle ideal domain $\ca{R}$, the structure of non-degenerate multiplicative maps $\phi : M_n(\ca{R}) \to M_n(\ca{R})$ (i.e.\ maps that are not zero on all zero-determinant matrices) was completely described by Jodeit and Lam in \cite{JodeitLam}. 
	Specifically, by \cite[Corollary]{JodeitLam}, every bijective multiplicative map $\phi : M_n(\mathcal{R}) \to M_n(\mathcal{R})$ has the form
	$$
	\phi(X)=T\omega(X)T^{-1}, \quad \text{ for all } X \in M_n(\ca{R}),
	$$
	for some invertible matrix $T \in M_n(\ca{R})$ and a ring automorphism $\omega$ of $\ca{R}$, where $\omega(X)$ denotes the matrix in $M_n(\ca{R})$ obtained by applying $\omega$ entrywise to $X$. Moreover, in \cite{Pierce}, Pierce demonstrated that the Jodeit-Lam characterization does not extend to matrix rings over arbitrary integral domains. More recently, in  \cite{Semrl2} \v{S}emrl provided a comprehensive description of the (non-degenerate) multiplicative endomorphisms of matrix rings over arbitrary division rings, as well as the structure of multiplicative bijective maps of standard operator algebras (i.e.\ subalgebras of bounded linear maps on a complex Banach space that contain all finite-rank operators) \cite{Semrl1}.
	
	\smallskip

	In addition to ring homomorphisms, another important class of transformations between rings is that of Jordan homomorphisms. Specifically, a \emph{Jordan homomorphism} between associative rings (algebras) $\ca{A}$ and $\ca{B}$ is an additive (linear) map $\phi : \ca{A} \to \ca{B}$ that satisfies the condition 
	\begin{equation}\label{eq:Jordan map}
		\phi(xy+yx) = \phi(x)\phi(y) + \phi(y)\phi(x), \quad  \text{ for all } x,y \in \ca{A}.
	\end{equation}
	In the case where the rings (algebras) are $2$-torsion-free, this condition is equivalent to the requirement that $\phi$ preservers squares, meaning that 
	\begin{equation*}
		\phi(x^2) = \phi(x)^2, \quad  \text{ for all } x \in \ca{A}.
	\end{equation*}
	A fundamental problem in Jordan theory, with a rich historical background, is to identify conditions on rings (algebras) $\ca{A}$ and $\ca{B}$ which ensure that any Jordan homomorphism $\phi: \ca{A} \to \ca{B}$ (typically under additional assumptions such as surjectivity)  is either multiplicative or antimultiplicative, or more generally, can be expressed as a suitable combination of such maps. For foundational results on this subject, we refer to the papers of Herstein, Jacobson-Rickart, and Smiley \cite{Herstein, JacobsonRickart, Smiley}. The theory of Jordan homomorphisms originates from Jordan algebras, a class of nonassociative algebras that appear in various fields, including functional analysis and the mathematical foundations of quantum mechanics. Most of the practically relevant Jordan algebras naturally arise as subalgebras of an associative real or complex algebra $\mathcal{A}$, equipped with the \emph{normalized Jordan product} given by
	\begin{equation}\label{eq:normalized Jordan product}
		x \circ y:=\frac{1}{2}(xy+yx), \quad  \text{ for all } x,y \in \ca{A}.
	\end{equation}
	In  more general settings, particularly when working with rings or algebras $\ca{A}$ over an arbitrary field $\F$, possibly of characteristic $2$, one typically considers the \emph{(standard) Jordan product}, defined by
	\begin{equation}\label{eq:Jordan product}
		x \diamond y:= xy + yx, \quad  \text{ for all } x,y \in \ca{A}.
	\end{equation}
	When the characteristic of $\F$ is not $2$, it is clear that a linear map $\phi$ between $\F$-algebras $\ca{A}$ and $\ca{B}$ is a Jordan homomorphism if and only if it preserves the normalized Jordan product, i.e.\ 
	\begin{equation}\label{eq:Jordan-norm}
		\phi(x\circ y)=\phi(x) \circ \phi(y), \quad  \text{ for all } x,y \in \ca{A}.
	\end{equation} 
	In \cite[Theorem 1]{Molnar}, Molnar characterizes the bijective solutions of the functional equation \eqref{eq:Jordan-norm}, when both $\ca{A}$ and $\ca{B}$ are standard (complex) operator algebras and $\ca{A}\not\cong \C$. A key consequence of this result is that such maps are automatically additive. The finite-dimensional version of Molnar's theorem (which, along with \cite[Corollary]{JodeitLam}, serves as the primary motivation for the present work) asserts that any bijective map $\phi : M_n(\C)\to M_n(\C)$, $n \ge 2$, that satisfies \eqref{eq:Jordan-norm} is of the form
	$$
	\phi(X)=T\omega(X)T^{-1} \quad \text{ or } \quad \phi(X)=T\omega(X)^tT^{-1}, \quad  \text{ for all } X \in M_n(\C),
	$$
	for some invertible matrix $T \in M_n(\C)$ and a ring automorphism $\omega$ of $\C$, where $(\cdot)^t$ stands for the transposition. For additional variants and generalizations of Molnar’s result, particularly those addressing the automatic additivity of bijective solutions of \eqref{eq:Jordan-norm}, we refer to \cite{Ji,JiLiu,LiXiao,Lu} and the references therein.
	
	\smallskip 
	
	The purpose of this paper is to extend both \cite[Corollary]{JodeitLam} and the finite-dimensional variant of  \cite[Theorem~1]{Molnar} to the setting of injective maps on  \emph{structural matrix algebras (SMAs)}. These are subalgebras of the matrix algebra $M_n(\mathbb{F})$ over a field $\mathbb{F}$ spanned by matrix units indexed by a quasi-order on the set $\{1, \ldots, n\}$. For convenience, we focus specifically on the case where $\F$ is the field $\C$ of complex numbers. A simple argument shows that SMAs are precisely subalgebras of $M_n(\mathbb{C})$ that contain all diagonal matrices (see \cite[Proposition 3.1]{GogicTomasevic}). SMAs were originally introduced by van Wyk in \cite{VanWyk} and, since then, they (and the closely related incidence algebras) have been  the subject of extensive study, including works such as \cite{Akkurt, Akkurt2, AkkurtBarkerWild, BeslagaDascalescu, BeslagaDascalescu2, BrusamarelloFornaroliKhrypchenko1, BrusamarelloFornaroliKhrypchenko2, Coelho, Coelho2,GarcesKhrypchenko, GarcesKhrypchenko2, GogicTomasevic, GogicTomasevic2, SlowikVanWyk,VanWyk}. Let us highlight that the description of the (algebra) automorphisms of SMAs was provided by Coelho in \cite[Theorem C]{Coelho}, while the description of Jordan embeddings (monomorphisms) between two SMAs in $M_n(\C)$ was established in our recent paper \cite{GogicTomasevic}. The main result of the current paper, presented in Theorem \ref{thm:main result}, provides a characterization of SMAs $\ca{A} \subseteq M_n(\C)$ with the property that any injective map $\phi : \ca{A} \to M_n(\C)$, preserving either the standard matrix multiplication, the Jordan product~\eqref{eq:Jordan product}, or the normalized Jordan product~\eqref{eq:normalized Jordan product}, is automatically additive. This occurs precisely when $\ca{A}$ does not contain a central rank-one idempotent. Moreover, in this case, we describe the exact form of such maps. Furthermore, Example \ref{ex:nonSMAexample} illustrates that Theorem \ref{thm:main result} cannot, in general, be extended to arbitrary unital subalgebras of $M_n(\C)$.
	The paper concludes with a discussion on possible extensions of Theorem \ref{thm:main result} to SMAs over more general fields (Remark \ref{rem:extension to fields}).

	\section{Notation and Preliminaries}\label{sec:prel}
	Let us now introduce some notation which will be used throughout the paper. First of all, for an arbitrary set $S$, by $|S|$ we denote its cardinality. 
	
	\smallskip
	
	Given a unital associative complex algebra $\ca{A}$, by $Z(\ca{A})$,  $\ca{A}^\times$ and $\idem(\ca{A})$ we denote its centre, the group of all invertible elements and the set of all idempotents in $\ca{A}$, respectively. By $\circ$ we denote the (normalized) Jordan product, defined by \eqref{eq:normalized Jordan product}. Note that $p \in \ca{A}$ is an idempotent if and only if it is a Jordan idempotent (i.e.\ satisfies $p \circ p = p$). For $p \in \idem(\ca{A})$ we denote $p^\perp:=1-p \in \idem(\ca{A})$. Further, for $p,q \in \idem(\ca{A})$ we write 
	$$
	p \le q \qquad \text{if} \qquad pq = qp = p
	$$ 
	and 
	$$
	p \perp q  \qquad \text{if} \qquad pq = qp =0.
	$$
	Obviously $\le$ constitutes a partial order on $\idem(\ca{A})$.   We have the following straightforward, yet useful lemma.
	\begin{lemma}\label{le:Jordan product calculations} For $p,q \in \idem(\ca{A})$ and an arbitrary $a \in \ca{A}$ we have:
		\begin{enumerate}[(a)]
			\item $p \circ a = 0$ if and only if $pa = ap = pap = 0$.
			\item $p \circ a = a$ if and only if $pa = ap = pap=a$.
			\item $p \perp q$ if and only if $p \circ q = 0$.
			\item $p \le q$ if and only if $p \circ q = p$.
		\end{enumerate}
	\end{lemma}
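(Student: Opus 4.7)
The plan is to prove part (a) by a short direct calculation and then to deduce parts (b)--(d) from it as one-line corollaries. In each of the four equivalences the backward direction is a trivial check from $p \circ a = \tfrac{1}{2}(pa+ap)$ together with $p^2 = p$, so only the forward implications carry any content.

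For (a), assuming $pa+ap=0$, I would multiply this relation by $p$ on the left and on the right (using $p^2=p$) to get $pa = -pap = ap$, and then substitute $pa = ap$ back into $pa+ap=0$ to force $2pa=0$, hence $pa=0$; the equalities $ap=0$ and $pap=0$ follow immediately. This is the only place where $2$-torsion-freeness enters, which is harmless since $\mathcal{A}$ is a complex algebra.

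Parts (c) and (d) are then immediate from (a): for (c) simply apply (a) with $a = q$, and for (d) use $p \circ p = p$ to rewrite the hypothesis $p \circ q = p$ as $p \circ (q-p) = 0$, and invoke (a) with $a = q-p$ to obtain $pq = qp = p$ (note that (a) only requires $p$ to be idempotent, not the second argument). Part (b) follows from the identity $p \circ a + p^\perp \circ a = a$, which is a consequence of $p + p^\perp = 1$, and which reduces the condition $p \circ a = a$ to $p^\perp \circ a = 0$; applying (a) to the idempotent $p^\perp$ then yields $p^\perp a = a p^\perp = 0$, equivalently $pa = ap = a$, from which $pap = a$ is automatic. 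No serious obstacle arises here — the whole lemma is essentially a warm-up reduction of (b)--(d) to part (a).
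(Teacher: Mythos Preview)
Your proof is correct. Part (a) is handled exactly as in the paper: multiply $pa+ap=0$ by $p$ on each side and use $2$-torsion-freeness. The only difference is structural: the paper proves (b) by an independent direct calculation parallel to (a) (multiply $pa+ap=2a$ by $p$ on each side to get $pa=ap=pap$, then conclude $a=pap$) and then deduces (d) from (b), whereas you reduce both (b) and (d) to (a) via the substitutions $p \mapsto p^\perp$ and $a \mapsto q-p$ respectively. Your route is marginally more economical since it reuses a single computation; the paper's is marginally more self-contained since it does not introduce $p^\perp$. Either way the content is the same one-line manipulation.
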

	\begin{proof}
		Clearly, (a) $\implies$ (c) and (b) $\implies$ (d), so we prove only (a) and (b).
		\begin{enumerate}[(a)]
			\item If $pa = ap = 0$, then trivially $p \circ a =0$.    Conversely,  $p \circ a = 0$ is equivalent to $pa+ap=0$.
			Multiplying this equality from the left and right by $p$ yields $pa = ap = -pap$. Hence, 
			$$0 = pa + ap = -2pap \implies pa = ap = 0 = pap.$$
			\item If $pa = ap = a$, then obviously $p \circ a = a$.
			Conversely, $p \circ a = a$ is equivalent to 
			$pa + ap= 2a$.
			Multiplying this equality from the left and right by $p$ yields $pa=ap=pap$. Therefore,
			$$a = \frac{1}{2}(pa + ap) = pap \implies pa = ap = pap = a.$$
		\end{enumerate}
	\end{proof}
	
	\smallskip
	
	Let $n \in \N$.
	\begin{itemize}
		\item[--] By $[n]$ we denote the set $\{1,\ldots, n\}$.
		\item[--] By $M_n=M_n(\C)$ we denote the algebra of $n \times n$ complex matrices and by  $\ca{D}_n$ its subalgebra consisting of all diagonal matrices. 
		\item[--] Given a matrix $X \in M_n$, by $r(X)$ and $\Tr(X)$ we denote the rank and the trace of $X$, respectively.
		\item[--] For $X,Y \in M_n$, by $X \propto Y$ we denote the fact that either $X = Y = 0$, or they are both nonzero and collinear.
		\item[--] For $i,j \in [n]$, by $E_{ij}\in M_n$  we denote the standard matrix unit with $1$ at the position $(i,j)$ and $0$ elsewhere. As any matrix $X = [X_{ij}]_{i,j=1}^n \in M_n$ can be understood as a map $[n]^2 \to \C, (i,j) \mapsto X_{ij}$, we consider its \emph{support }$\supp X$ as the set of all pairs $(i,j) \in [n]^2$ such that $X_{ij} \ne 0$. Moreover, for a set $S \subseteq [n]^2$ we say that $X$ is \emph{supported in $S$} if $\supp X \subseteq S$.
		\item[--] Given a ring endomorphism $\omega$ of $\C$, we use the same symbol $\omega$ to denote the induced ring endomorphism of $M_n$, defined by applying the function $\omega$ to each entry of the underlying matrix, i.e.\   
		$$\omega(X)=[\omega(X_{ij})]_{i,j=1}^n, \quad  \text{ for all } X=[X_{ij}]_{i,j=1}^n\in M_n.$$
		\item[--] Given a binary relation $\rho$ on $[n]$, for a fixed $i \in [n]$ by $\rho(i)$ and $\rho^{-1}(i)$ we denote its image and preimage by $\rho$, respectively, i.e.
		$$\rho(i)= \{j \in [n] : (i,j) \in \rho\}, \qquad \rho^{-1}(i)= \{j \in [n] : (j,i) \in \rho\}.$$
		We also write $\rho^\times$ for $\rho\setminus \{(1,1), \ldots , (n,n)\}$.
		\item[--] By a \emph{quasi-order} on $[n]$ we mean a reflexive and transitive binary relation on $[n]$.
	\end{itemize}
	
	\medskip
	
	Given a quasi-order $\rho$ on $[n]$ we define the unital subalgebra of $M_n$ by
	$$\ca{A}_\rho :=\{X \in M_n : \supp X \subseteq \rho\}=\spn\{E_{ij} : (i,j) \in \rho\},$$
	which we call a \emph{structural matrix algebra (SMA) defined by the quasi-order $\rho$}. As already noted, structural matrix algebras are precisely the subalgebras of $M_n$ that contain $\ca{D}_n$ (see \cite[Proposition 3.1]{GogicTomasevic}). We explicitly state  the following result from \cite{GogicTomasevic}, which will be used in the proof of our main result (Theorem \ref{thm:main result}) on a few occasions. 
	
	\begin{theorem}[{\cite[Theorem 3.4]{GogicTomasevic}}]\label{thm:inner diagonalization on SMA}
		Let $\ca{A}_\rho \subseteq M_n$ be an SMA and let $\ca{F} \subseteq \ca{A}_\rho$ be a commuting family of diagonalizable matrices. Then there exists $S \in \ca{A}_\rho^\times$ such that $S\ca{F} S^{-1} \subseteq \ca{D}_n$.
	\end{theorem}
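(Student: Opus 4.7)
The plan is to induct on $n$. The base case $n = 1$ is trivial. For $n \geq 2$, if $\rho$ has a single equivalence class, then $\ca{A}_\rho = M_n$ and one invokes the classical simultaneous diagonalization theorem. Otherwise, pick a minimal equivalence class $C_1$ of $\rho$ (under the induced partial order on equivalence classes), of size $m < n$. After reindexing so that $C_1 = \{1, \ldots, m\}$, every $X \in \ca{A}_\rho$ takes the block upper triangular form $X = \bigl(\begin{smallmatrix} X_{11} & X_{12} \\ 0 & X_{22} \end{smallmatrix}\bigr)$ with $X_{11} \in M_m$ and $X_{22} \in \ca{A}_{\rho'}$, where $\rho'$ is the restriction of $\rho$ to $[n] \setminus C_1$.

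The families $\{X_{11} : X \in \ca{F}\} \subseteq M_m$ and $\{X_{22} : X \in \ca{F}\} \subseteq \ca{A}_{\rho'}$ are commuting families of diagonalizable matrices — diagonalizability of the diagonal blocks follows because any polynomial annihilating a block upper triangular matrix also annihilates each of its diagonal blocks. By the classical result applied to $M_m$ and the induction hypothesis applied to $\ca{A}_{\rho'}$, there exist $S_1 \in M_m^\times$ and $S_2 \in \ca{A}_{\rho'}^\times$ simultaneously diagonalizing these two projected families. Conjugating $\ca{F}$ by $S_0 = \diag(S_1, S_2) \in \ca{A}_\rho^\times$, one may assume $X_{11} = D_{1,X}$ and $X_{22} = D_{2,X}$ are diagonal for every $X \in \ca{F}$.

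It remains to eliminate the off-diagonal blocks $B_X := X_{12}$ simultaneously. Try the ansatz $S = I + \bigl(\begin{smallmatrix} 0 & K \\ 0 & 0 \end{smallmatrix}\bigr)$ with $K$ in the permitted off-diagonal subspace of $\ca{A}_\rho$, which is automatically invertible since its nilpotent part squares to zero. A direct computation shows $SXS^{-1}$ is diagonal for every $X \in \ca{F}$ iff the Sylvester-type system $D_{1,X} K - K D_{2,X} = B_X$ holds uniformly in $X$, i.e.\ entrywise $(d_i^X - d_j^X)\, K_{ij} = (B_X)_{ij}$ for all $(i,j) \in \rho$ with $i \in C_1$ and $j \notin C_1$. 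For each such $(i,j)$: if some $X_0 \in \ca{F}$ has $d_i^{X_0} \neq d_j^{X_0}$, set $K_{ij} := (B_{X_0})_{ij}/(d_i^{X_0} - d_j^{X_0})$; expanding the off-diagonal block of the relation $X_0 Y = Y X_0$ produces $(d_i^{X_0} - d_j^{X_0})(B_Y)_{ij} = (d_i^Y - d_j^Y)(B_{X_0})_{ij}$, confirming the same value works for every $Y \in \ca{F}$. If $d_i^X = d_j^X$ for all $X \in \ca{F}$, set $K_{ij} := 0$.

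The technical heart of the argument is then to verify that $(B_X)_{ij} = 0$ whenever $d_i^X = d_j^X$: for an individual diagonalizable block upper triangular matrix $X$ with $D_{1,X}, D_{2,X}$ diagonal, a direct comparison of algebraic and geometric multiplicities of any eigenvalue $\lambda$ forces the submatrix of $B_X$ on the indices $(i,j)$ with $d_i^X = d_j^X = \lambda$ to vanish (otherwise the nonzero rank of that submatrix would produce a Jordan block for $\lambda$, contradicting diagonalizability). Putting everything together, $S \cdot S_0 \in \ca{A}_\rho^\times$ simultaneously diagonalizes $\ca{F}$, closing the induction.
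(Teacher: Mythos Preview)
The present paper does not actually contain a proof of this theorem: it is quoted verbatim from \cite[Theorem~3.4]{GogicTomasevic} and only applied, never reproved. So there is no in-paper argument to compare your proposal against.

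That said, your argument is correct and self-contained. The key points all check out: picking a minimal equivalence class of the quasi-order forces the $(2,1)$ block to vanish; the diagonal blocks of matrices in $\ca{F}$ inherit commutativity and diagonalizability (via the minimal polynomial with simple roots); block-diagonal conjugation by $S_0=\diag(S_1,S_2)$ stays inside $\ca{A}_\rho^\times$ because $S_1$ is supported in $C_1\times C_1\subseteq\rho$ and $S_2\in\ca{A}_{\rho'}^\times\subseteq\ca{A}_\rho$; and the Sylvester system for $K$ is solvable with $K$ supported in $\rho$, since for $(i,j)\notin\rho$ one has $(B_X)_{ij}=0$ automatically, while for $(i,j)\in\rho$ your two cases cover everything. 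The diagonalizability argument forcing $(B_X)_{ij}=0$ whenever $d_i^X=d_j^X$ is exactly the observation that a block upper triangular matrix $\bigl(\begin{smallmatrix}D_1 & B\\ 0 & D_2\end{smallmatrix}\bigr)$ with diagonal $D_1,D_2$ has geometric multiplicity for $\lambda$ equal to the algebraic one only if the submatrix of $B$ on rows $\{i:(D_1)_{ii}=\lambda\}$ and columns $\{j:(D_2)_{jj}=\lambda\}$ vanishes. One cosmetic remark: the reindexing step (moving $C_1$ to $\{1,\dots,m\}$) is conjugation by a permutation matrix, which does not lie in $\ca{A}_\rho$ in general; but since permutations preserve $\ca{D}_n$ and conjugate one SMA to another, the final $S$ can simply be conjugated back, so this is harmless.
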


	\smallskip
	
	Additionally, as in \cite{GogicTomasevic}, given a quasi-order $\rho$ on $[n]$, by $\tripprox_0$ we denote the associated  binary relation on $[n]$, given by
	$$i \tripprox_0 j \stackrel{\text{def}}\iff (i,j) \in \rho \text{ or }(j,i) \in \rho.$$
	Its transitive closure is denoted by $\tripprox$, which forms an equivalence relation. The corresponding quotient set $[n]/\mathop{\tripprox}$ is denoted by $\ca{Q}$. We refer to each element $C \in \ca{Q}$ as a \emph{central class} of $\ca{A}_\rho$, since by \cite[Remark~3.3]{GogicTomasevic} we have 
	\begin{equation*}
		Z(\ca{A}_\rho)= \{\diag(\lambda_1,\ldots,\lambda_n) \in \ca{D}_n : ( \text{ for all } i,j \in [n])(i \tripprox j \implies \lambda_i = \lambda_j)\}.
	\end{equation*}
	Specifically, $\dim Z(\ca{A}_\rho)=\abs{\ca{Q}}$. Further, for any subset  $S \subseteq [n]$ we define the corresponding diagonal idempotent of $\ca{A}_\rho$ by
	\begin{equation}\label{eq:P_S}
		P_S := \sum_{i \in S} E_{ii}.
	\end{equation}
	In particular, $(P_C)_{C \in \ca{Q}}$ is a mutually orthogonal family of idempotents in $Z(\ca{A}_{\rho})$ such that $\sum_{C \in \ca{Q}} P_C = I$ (consequently, $(P_C)_{C \in \ca{Q}}$ 
	is a basis for $Z(\ca{A}_\rho)$). For each $C \in \ca{Q}$ we can identify the two-sided ideal $P_C \ca{A}_\rho=P_C \ca{A}_\rho P_C$ of $\ca{A}_\rho$ with the subalgebra of $M_{|C|}$ obtained from $\ca{A}_\rho$ by deleting all rows and columns not in $C$. Then $P_C \ca{A}_\rho$ becomes a central SMA in $M_{|C|}$  (i.e.\ $Z(P_C  \ca{A}_\rho)$ consists only of the scalar multiples of the identity in $M_{|C|}$), so that $\ca{A}_\rho$ is isomorphic to the direct sum of central SMAs, i.e.\
	\begin{equation}\label{eq:central decomposition}
		\ca{A}_\rho \cong \bigoplus_{C \in \ca{Q}} P_C \ca{A}_\rho.
	\end{equation}
	We refer to this fact as the \emph{central decomposition} of $\ca{A}_{\rho}$.
	\smallskip
	
	Further, following \cite{Coelho}, given a quasi-order $\rho$ on $[n]$ we say that a map $g : \rho \to \C^\times$ is \emph{transitive} if it satisfies
	$$g(i,j)g(j,k) = g(i,k),  \quad  \text{ for all } (i,j), (j,k) \in \rho.$$
	Every transitive map $g$ clearly induces an (algebra) automorphism $g^*$ of $\ca{A}_\rho$, defined on the basis of matrix units as
	\begin{equation}\label{eq:inducedauto}
		g^*(E_{ij}) = g(i,j)E_{ij}, \quad  \text{ for all } (i,j) \in\rho.
	\end{equation}
	
	\smallskip

	Finally, given a complex algebra $\mathcal{A}$ and an additive (commutative) semigroup $\ca{S}$, we say that a map $\psi : \idem(\ca{A}) \to \ca{S}$ is \emph{orthoadditive} if 
	$$p \perp q \implies \psi(p+q) = \psi(p)+\psi(q), \quad \text{ for all } p,q \in \idem(\ca{A}).$$
	A prototypical example of an orthoadditive map is the rank function $\idem(M_n) \to \N_0$, $P \mapsto r(P)$.

	\section{Main result}\label{sec:mainresult}
	We begin this section by stating our main result.
	\begin{theorem}\label{thm:main result}
		Let $\ca{A}_\rho \subseteq M_n$ be an SMA and let $\phi : \ca{A}_\rho \to M_n$ be an arbitrary injective map which satisfies
		\begin{equation}\label{eq:bullet preserving}
			\phi(X \bullet Y) = \phi(X) \bullet \phi(Y), \quad  \text{ for all } X,Y \in \ca{A}_\rho,
		\end{equation}
		where $\bullet$ is either the standard matrix multiplication, the Jordan product $\diamond$, or the normalized Jordan product $\circ$. The following conditions are equivalent: 
		\begin{itemize}
			\item[(i)] $|C|\ge 2$ for all $C \in \ca{Q}$ (i.e.\ $\ca{A}_\rho$ does not contain a central rank-one  idempotent). 
			\item[(ii)] All such maps $\phi$ are additive.
			\item[(iii)] For any such map $\phi$, there exists $T \in M_n^\times$, a transitive map $g : \rho \to \C^\times$, and for each $C \in \ca{Q}$ a nonzero ring endomorphism $\omega_C$ of $\C$ and an assignment $\dagger_C$ which is either the identity (always the case when $\phi$ is assumed to be multiplicative) or the transposition such that
			$$\phi(X) = Tg^*\left(\sum_{C \in \ca{Q}} \omega_C(P_CX)^{\dagger_C}\right) T^{-1}, \quad  \text{ for all } X \in \ca{A}_\rho,$$
			where $(P_C)_{C \in \ca{Q}}$ is a basis of mutually orthogonal idempotents  of $Z(\ca{A}_\rho)$ (defined by \eqref{eq:P_S}). 
		\end{itemize}
	\end{theorem}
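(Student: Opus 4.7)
The plan is to establish the implications $(iii) \Rightarrow (ii)$ trivially, $\neg(i) \Rightarrow \neg(ii)$ by an explicit counterexample, $(ii) \Rightarrow (iii)$ by reduction to the classification of Jordan embeddings of SMAs from the authors' previous paper \cite{GogicTomasevic}, and to reserve the bulk of the argument for $(i) \Rightarrow (ii)$.

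\textbf{Easy directions.} The implication $(iii) \Rightarrow (ii)$ is immediate, since the formula in $(iii)$ is built from additive operations: the ring endomorphisms $\omega_C$, transposition, multiplication by the central idempotents $P_C$, the automorphism $g^*$, and conjugation by $T$. For $\neg(i) \Rightarrow \neg(ii)$, suppose some $C_0 \in \ca{Q}$ satisfies $|C_0| = 1$, so that $P_{C_0} = E_{kk}$ is a central rank-one idempotent. The central decomposition \eqref{eq:central decomposition} gives $\ca{A}_\rho = \C E_{kk} \oplus (I - E_{kk})\ca{A}_\rho$. Choosing any non-additive multiplicative bijection $\sigma : \C \to \C$ (for instance $z \mapsto z|z|$ extended by $\sigma(0) = 0$), the assignment $\lambda E_{kk} + Y \mapsto \sigma(\lambda) E_{kk} + Y$ is injective and $\bullet$-preserving but fails additivity.

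\textbf{The implication $(ii) \Rightarrow (iii)$.} Once $\phi$ is additive, it becomes a Jordan (respectively ring) monomorphism $\ca{A}_\rho \to M_n$, possibly not $\C$-linear. The classification of such monomorphisms from \cite{GogicTomasevic}, combined with a Jodeit--Lam-style treatment of each central block, yields the claimed form, with the ring endomorphisms $\omega_C$ capturing precisely the failure of $\C$-linearity on each central block and the per-class transposition $\dagger_C$ accounting for the two possible Jordan embeddings of a central SMA.

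\textbf{Main direction and main obstacle: $(i) \Rightarrow (ii)$.} Lemma \ref{le:Jordan product calculations} (and its direct analogue for matrix multiplication) implies that $\bullet$-preservation sends idempotents to idempotents and preserves both orthogonality and the partial order $\le$. Thus $(\phi(E_{ii}))_{i=1}^n$ is a commuting family of mutually orthogonal idempotents in $M_n$, simultaneously diagonalizable by conjugation with some $T \in M_n^\times$ (via Theorem \ref{thm:inner diagonalization on SMA} applied in $M_n$); absorbing $T$, we reduce to the case $\phi(E_{ii}) \in \ca{D}_n$ for each $i$. Using orthoadditivity of the rank function together with injectivity of $\phi$, we obtain $\phi(E_{ii}) = E_{\pi(i)\pi(i)}$ for a permutation $\pi$ of $[n]$. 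A Peirce analysis with respect to the $\phi(E_{ii})$ forces $\phi(E_{ij}) = g(i,j) E_{\pi(i)\pi(j)}$ or $\phi(E_{ij}) = g(i,j) E_{\pi(j)\pi(i)}$ for a transitive map $g : \rho \to \C^\times$, with the choice between the two branches constant on each central class (producing the per-class assignment $\dagger_C$; in the purely multiplicative case only the identity branch survives, as transposition reverses products). Composing with $(g^*)^{-1}$ normalizes $\phi$ on matrix units, and writing $\phi(\lambda E_{ij}) = \omega_C(\lambda) E_{\pi(i)\pi(j)}$ defines multiplicative functions $\omega_C : \C \to \C$, one per central class $C$.

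The principal obstacle, and the unique place where the hypothesis $(i)$ is actually needed, is showing each $\omega_C$ is additive. With $|C| \ge 2$ we may pick distinct $i, j \in C$, so that at least one of $E_{ij}, E_{ji}$ lies in $\ca{A}_\rho$; constructing $\bullet$-products of carefully chosen elements such as $\lambda E_{ii} + E_{ij}$ and $E_{jj} + \mu E_{ji}$ (with natural modifications depending on which of $(i,j), (j,i) \in \rho$) yields a matrix whose relevant entry equals $\lambda + \mu$, and applying $\phi$ to both sides forces $\omega_C(\lambda + \mu) = \omega_C(\lambda) + \omega_C(\mu)$. This Martindale--\v{S}emrl-style cross-product technique collapses precisely when $|C| = 1$, matching the counterexamples above; carrying it out carefully across every central class, and distinguishing the three possible $\bullet$-products, is the main analytical burden.
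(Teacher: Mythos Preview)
Your outline is essentially the paper's own strategy, but the logical scaffolding is slightly tangled and one of your shortcuts does not work. The implication you label $(ii)\Rightarrow(iii)$ cannot be obtained by citing \cite{GogicTomasevic}: that paper classifies $\C$-\emph{linear} Jordan embeddings, whereas an additive $\bullet$-preserving map need not be $\C$-linear, and ``Jodeit--Lam-style treatment of each central block'' is precisely the work you have already deferred to the main direction. Fortunately this step is superfluous: the argument you sketch under the heading $(i)\Rightarrow(ii)$ in fact produces the explicit formula in $(iii)$ (you yourself write down $g$, $\dagger_C$ and $\omega_C$), so it proves $(i)\Rightarrow(iii)$ directly, exactly as the paper does, and the cycle closes as $(iii)\Rightarrow(ii)\Rightarrow(i)\Rightarrow(iii)$.

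Within that main argument one detail needs correction. The elements you propose for the additivity of $\omega_C$, namely $\lambda E_{ii}+E_{ij}$ and $E_{jj}+\mu E_{ji}$, are not idempotents, and at this stage of the proof you have no control over their images under $\phi$; moreover the second one requires $(j,i)\in\rho$, which may fail. The paper instead uses the genuine idempotents $E_{ii}+xE_{ij}$ and $E_{jj}+yE_{ij}$ (only $(i,j)\in\rho^\times$ is needed), computes their images via the Peirce relations already established, and reads off $\omega_C\!\left(\tfrac{x+y}{2}\right)=\omega_C\!\left(\tfrac{x}{2}\right)+\omega_C\!\left(\tfrac{y}{2}\right)$ from their Jordan product $\tfrac{x+y}{2}E_{ij}$. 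Apart from this, your Peirce/rank analysis, the per-class dichotomy for $\phi(E_{ij})$, and the role of the hypothesis $|C|\ge 2$ all match the paper's proof.
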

	In proving Theorem \ref{thm:main result}, we shall utilize the following auxiliary facts. 
	\begin{lemma}\label{le:idempotents sum of rank one}
		Let $\ca{A}_\rho \subseteq M_n$ be an SMA. An idempotent $P \in \idem(\ca{A_\rho})$ is of rank $r \in [n]$ if and only if there exist mutually orthogonal rank-one idempotents $Q_1, \ldots, Q_{r} \in \idem(\ca{A_\rho})$ such that $P=Q_1+ \cdots + Q_r$.  Moreover, if $S \subseteq [n]$ and $\supp P \subseteq S \times S$, then we can further achieve that $\supp Q_j \subseteq S \times S$ for all $j \in [r]$.
	\end{lemma}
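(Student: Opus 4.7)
The ``if'' direction is immediate: if $P = Q_1 + \cdots + Q_r$ with the $Q_j$'s mutually orthogonal rank-one idempotents, then orthoadditivity of the rank gives $r(P) = r$.

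For the ``only if'' direction, the main idea is to apply Theorem \ref{thm:inner diagonalization on SMA} to the singleton commuting family $\{P\} \subseteq \ca{A}_\rho$. This produces a matrix $T \in \ca{A}_\rho^\times$ with $TPT^{-1} = D \in \ca{D}_n$. Since $D$ is then a diagonal idempotent of rank $r$, it has the form $D = P_J = \sum_{j \in J} E_{jj}$ for some $J \subseteq [n]$ with $|J| = r$. Define $Q_j := T^{-1} E_{jj} T$ for $j \in J$. Each $Q_j$ lies in $\ca{A}_\rho$ (as $E_{jj} \in \ca{D}_n \subseteq \ca{A}_\rho$ and $T, T^{-1} \in \ca{A}_\rho$), and the relations $E_{jj}^2 = E_{jj}$, $E_{jj}E_{kk} = 0$ for $j \ne k$, and $\sum_{j \in J} E_{jj} = D$ pull back to show that $\{Q_j\}_{j \in J}$ is a family of mutually orthogonal rank-one idempotents summing to $P$.

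The ``moreover'' statement requires more care, since the matrix $T$ produced above by Theorem \ref{thm:inner diagonalization on SMA} applied in the ambient SMA $\ca{A}_\rho$ need not preserve the support constraint $\supp Q_j \subseteq S \times S$. The fix is to localize the argument: observe that $\ca{B} := P_S \ca{A}_\rho P_S = \spn\{E_{ij} : (i,j) \in \rho \cap (S \times S)\}$ is itself an SMA inside $M_{|S|}$, corresponding to the restriction of the quasi-order $\rho$ to $S$. Since $\supp P \subseteq S \times S$ the idempotent $P$ belongs to $\ca{B}$, so we may apply Theorem \ref{thm:inner diagonalization on SMA} \emph{inside} $\ca{B}$ to obtain a $T' \in \ca{B}^\times$ (invertible with respect to the identity $P_S$ of $\ca{B}$) with $T' P (T')^{-1} \in \ca{D}_n \cap \ca{B}$. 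The same construction as above then produces the required rank-one idempotents $Q_j \in \ca{B}$, which are automatically supported in $S \times S$.

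The only subtlety is to verify that the $Q_j = (T')^{-1} E_{jj} T'$ constructed inside $\ca{B}$ really have rank one as elements of $M_n$ (not just as elements of $\ca{B} \hookrightarrow M_{|S|}$), and that the idempotency and orthogonality calculations go through despite the fact that $T'(T')^{-1} = P_S \ne I$ in $M_n$. Both are routine: the products collapse because $E_{jj} P_S = E_{jj}$ for $j \in J \subseteq S$, and the rank of a matrix supported in $S \times S$ is the same whether computed in $M_n$ or in $M_{|S|}$ (under the obvious identification of $P_S M_n P_S$ with $M_{|S|}$). This handles the potential obstacle and completes the proof.
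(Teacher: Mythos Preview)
Your proof is correct. For the basic decomposition both you and the paper do the same thing: diagonalize $P$ inside $\ca{A}_\rho$ via Theorem~\ref{thm:inner diagonalization on SMA} and pull back the diagonal matrix units. The difference is in how the support constraint $\supp Q_j \subseteq S \times S$ is enforced. You localize to the corner $\ca{B} = P_S \ca{A}_\rho P_S$, observe that $\rho \cap (S \times S)$ is again a quasi-order on $S$ so that $\ca{B}$ is an SMA in $M_{|S|}$, and run the same argument there; the $Q_j$ then lie in $\ca{B}$ automatically. The paper instead stays in the ambient $\ca{A}_\rho$ and applies Theorem~\ref{thm:inner diagonalization on SMA} to the \emph{pair} $\{P, P_S^\perp\}$, obtaining a single $T \in \ca{A}_\rho^\times$ that simultaneously diagonalizes both; the resulting $Q_j = T E_{jj} T^{-1}$ are then orthogonal to $P_S^\perp$ and hence supported in $S \times S$. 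The paper's route avoids the bookkeeping around $T'(T')^{-1} = P_S \ne I$ that you (correctly) handle, while your localization is arguably more conceptual in that it literally reduces the ``moreover'' clause to the unconstrained case in a smaller SMA.
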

	\begin{proof}
		We prove only the forward implication as the converse is immediate from the orthoadditivity of the rank. We focus on the second claim, as the first one follows by plugging in $S = [n]$.
		
		Suppose therefore that $P \in \idem(\ca{A_\rho})$ is an idempotent of rank $r \in [n]$ supported in $S \times S$ for some $S \subseteq [n]$. We have $P \perp P_S^\perp$ (where  $P_S \in \idem(\ca{A_\rho})$ is defined by \eqref{eq:P_S}, and $P_S^\perp=I-P_S$), so by Theorem \ref{thm:inner diagonalization on SMA} there exists $T \in \ca{A}_\rho^\times$ and diagonal idempotents $D,D' \in \idem(\ca{A_\rho})$ such that $$P = TDT^{-1}, \qquad P_S^{\perp} = TD'T^{-1}.$$
		Since $P \perp P_S^\perp$, it follows that $D \perp D'$. Set
		$$Q_j := TE_{jj}T^{-1}, \quad \text{ where } (j,j) \in \supp D$$
		and note that $\{Q_j :  (j,j) \in \supp D\}$ is a family of $r$ mutually orthogonal rank-one idempotents summing up to $P$. Further, each $Q_j$ is clearly orthogonal to $P_S^{\perp}$ and hence supported in $S \times S$.
	\end{proof}
	
	\begin{lemma}\label{le:contains entire CxC}
		Let $\rho$ be a quasi-order on $[n]$ and let $\ca{S} \subseteq \rho^\times$ be a nonempty subset. Suppose that for each $(i,j) \in \ca{S}$ we have
		$$(i,k) \in \ca{S},  \forall k \in (\rho^\times)(i), \quad (l,j) \in \ca{S}, \forall l \in (\rho^\times)^{-1}(j), \quad \text{ and }\quad  (j,i) \in \rho^\times \implies (j,i) \in \ca{S}.$$
		If for each $(i,j) \in \ca{S}$ we denote by $C \in \ca{Q}$ the central class which contains $i$ and $j$, then we have $\rho^\times \cap (C\times C) \subseteq \ca{S}$.
	\end{lemma}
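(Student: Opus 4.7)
The plan is to show that, for any fixed pair $(i_0, j_0) \in \ca{S}$, the central class $C$ containing $i_0$ and $j_0$ is entirely ``saturated'' with edges from $\ca{S}$. The key leverage comes from the transitivity of $\rho$, which lets the three closure conditions reinforce one another.

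First I would establish the following \emph{propagation claim}: if an element $k \in [n]$ appears as either the first or the second coordinate of some pair in $\ca{S}$, then every pair in $\rho^\times$ having $k$ as an endpoint already belongs to $\ca{S}$. To prove this, assume $(k, m) \in \ca{S}$. The first closure condition immediately yields $(k, m') \in \ca{S}$ for every forward edge $(k, m') \in \rho^\times$. For a backward edge $(l, k) \in \rho^\times$, transitivity of $\rho$ gives $(l, m) \in \rho$; if $l = m$, the third closure condition applied to $(k, m)$ produces $(m, k) = (l, k) \in \ca{S}$, while otherwise $(l, m) \in \rho^\times$, so the second closure condition applied to $(k, m)$ puts $(l, m)$ into $\ca{S}$, after which the first closure condition applied to $(l, m)$ forces $(l, k) \in \ca{S}$. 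The symmetric case, starting from $(m, k) \in \ca{S}$, will be handled by an analogous argument swapping the roles of the first two closure conditions.

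With the propagation claim in hand, I will consider the ``endpoint set'' $E := \{k \in [n] : k \text{ is the first or second coordinate of some pair in } \ca{S}\}$. The propagation claim implies that $E$ is closed under $\tripprox_0$: if $k \in E$ and $k \tripprox_0 l$ with $k \neq l$, then one of $(k, l), (l, k)$ lies in $\rho^\times$ and, by the claim, in $\ca{S}$, which puts $l$ in $E$. Since $E$ contains both coordinates of $(i_0, j_0)$ and $C$ is by definition the $\tripprox$-equivalence class (i.e.\ the transitive closure of $\tripprox_0$-chains) generated by $i_0$, we deduce $C \subseteq E$. For any $(a, b) \in \rho^\times \cap (C \times C)$ we then have $a \in E$, and the propagation claim delivers $(a, b) \in \ca{S}$.

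The heart of the proof is the propagation claim, and the main obstacle is identifying the correct sequence of applications of the three closure conditions, interleaved with the transitivity of $\rho$, that converts information about forward edges at a vertex into information about backward edges (and vice versa). Once this is secured, the passage from a single pair to the entire central class is a routine connectivity argument.
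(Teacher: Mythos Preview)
Your proposal is correct and follows essentially the same approach as the paper: both arguments establish a propagation property asserting that once a vertex appears as an endpoint of some pair in $\ca{S}$, every $\rho^\times$-edge incident to that vertex lies in $\ca{S}$, and then pass to the central class via a $\tripprox_0$-chain connectivity argument. The only cosmetic difference is that the paper first isolates the auxiliary facts that $(j,l)\in\ca{S}$ for $l\in(\rho^\times)(j)\setminus\{i\}$ and $(k,i)\in\ca{S}$ for $k\in(\rho^\times)^{-1}(i)\setminus\{j\}$ before packaging them into the propagation claim, whereas you prove the propagation claim directly; the underlying use of transitivity and the three closure conditions is identical.
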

	\begin{proof}
		Fix some $(i,j) \in \ca{S}$. We first prove that
		$$(j,l) \in \ca{S}, \forall l \in (\rho^\times)(j)\setminus \{i\} \quad \text{ and } \quad (k,i) \in \ca{S},  \forall k \in (\rho^\times)^{-1}(i)\setminus \{j\}.$$
		Indeed, from $(j,l) \in \rho^\times$ by transitivity it follows $(i,l) \in \rho^\times$ and hence $(i,l) \in  \ca{S}$. It follows $(j,l) \in\ca{S}$. The same argument works for the other case.

		Now by $C \in \ca{Q}$ denote the central class which contains $i$ and $j$. Denote
		$$T := \{k \in [n] : \exists l \in [n] \text{ such that }(k,l) \in \ca{S} \text{ or }(l,k) \in \ca{S}\}.$$
		In view of the assumption and the property just established, we have $k \in T$ implies that $\ca{S}$ contains all pairs $(r,s) \in \rho^\times$ such that $k \in \{r,s\}$ (note that the last implication in the assumption is used here). Therefore, to prove the claim, it suffices to show that $C \subseteq T$. Indeed, if $(r,s) \in \rho^\times \cap (C \times C)$, then $r,s \in C \subseteq T$. Since $\ca{S}$ contains all pairs in $\rho^\times$ with either coordinate in $T$, we conclude $(r,s) \in \ca{S}$. Let $k \in C$ be arbitrary. Since $i \in C$, by the definition of $\ca{Q}$ we have $i \tripprox k$ so there exist $m \in \N$ and $i_0, i_1,\ldots,i_m \in [n]$ such that
		$$i = i_0 \tripprox_0 i_1 \tripprox_0 \cdots \tripprox_0 i_m = k.$$
		Since $i_0 = i \in T$, we clearly have $i_1 \in T$. We continue inductively and conclude $k \in T$.
	\end{proof}

	\begin{lemma}\label{le:basic properties}
		Let $\ca{A}_\rho \subseteq M_n$ be an SMA and let $\phi : \ca{A}_\rho \to M_n$ be an injective map which satisfies \eqref{eq:bullet preserving}, where $\bullet$ is either the standard matrix multiplication or the (normalized) Jordan product $\circ$. Then the following holds true.
		\begin{enumerate}[(a)]
			\item $\phi$  preserves idempotents, i.e.\ $\phi(\idem(\ca{A_\rho}))\subseteq \idem(M_n)$
			\item For $P,Q \in \idem(\ca{A_\rho})$ we have $P \le Q \implies \phi(P) \le \phi(Q)$.
			\item For each $P \in \idem(\ca{A_\rho})$ we have $r(\phi(P)) = r(P)$. In particular $\phi(0) = 0$ and $\phi(I) = I$.
			\item For $P,Q \in \idem(\ca{A_\rho})$ we have $P \perp Q \implies \phi(P) \perp \phi(Q)$.
			\item For each $P \in \idem(\ca{A_\rho})$ we have $\phi(P^\perp) = \phi(P)^\perp$.
			\item The restriction $\phi|_{\idem(\ca{A_\rho})}: \idem(\ca{A_\rho})\to \idem(M_n)$ is orthoadditive.
			\item Suppose that $P_1,\ldots,P_r \in \idem(\ca{A_\rho})$ are mutually orthogonal and let $\lambda_1,\ldots,\lambda_r \in \C$. Then
			$$\phi\left(\sum_{j=1}^r \lambda_j P_j\right) = \sum_{j=1}^r \phi(\lambda_j P_j).$$
		\end{enumerate}
	\end{lemma}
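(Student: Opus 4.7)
Parts (a) and (b) are immediate computations. Since $P^2=P$, the identity $P\bullet P=P$ holds for both $\bullet=\cdot$ and $\bullet=\circ$, so $\phi(P)\bullet \phi(P)=\phi(P)$ gives $\phi(P)^2=\phi(P)$. Similarly, if $P\le Q$ then $PQ=QP=P$ forces $P\bullet Q=P$, hence $\phi(P)\bullet\phi(Q)=\phi(P)$; combined with (a), Lemma \ref{le:Jordan product calculations}(d) yields $\phi(P)\le \phi(Q)$ in the Jordan case, while the matrix multiplication case is direct.

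The main step is (c). Given $P\in\idem(\ca{A}_\rho)$ of rank $r$, the plan is to embed $P$ into a maximal chain of idempotents of $\ca{A}_\rho$ and then transfer the chain via $\phi$. Applying Lemma \ref{le:idempotents sum of rank one} both to $P$ and to the complementary idempotent $P^\perp=I-P\in\idem(\ca{A}_\rho)$, I obtain mutually orthogonal rank-one idempotents $Q_1,\ldots,Q_n\in\idem(\ca{A}_\rho)$ with $P=Q_1+\cdots+Q_r$ and $I=Q_1+\cdots+Q_n$. Setting $P_j:=Q_1+\cdots+Q_j$ produces a chain $0=P_0<P_1<\cdots<P_n=I$ of length $n+1$ passing through $P=P_r$. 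Applying $\phi$ and using (a), (b), and injectivity, the images $\phi(P_0)\le\phi(P_1)\le\cdots\le\phi(P_n)$ are $n+1$ distinct idempotents of $M_n$, so their ranks strictly increase within $\{0,1,\ldots,n\}$. By pigeonhole $r(\phi(P_j))=j$ for every $j$, which gives $r(\phi(P))=r$ and in particular $\phi(0)=0$ and $\phi(I)=I$.

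With (c) in hand, (d), (e), (f) are short. For (d), $P\perp Q$ yields $P\bullet Q=0$, so $\phi(P)\bullet \phi(Q)=\phi(0)=0$; for $\bullet=\cdot$ the analogous identity on the other side gives literal orthogonality, and for $\bullet=\circ$ orthogonality follows from Lemma \ref{le:Jordan product calculations}(c). For (e), (d) gives $\phi(P)\perp\phi(P^\perp)$, so their sum is an idempotent; its rank equals $r+(n-r)=n$ by (c), hence it is $I$, and $\phi(P^\perp)=I-\phi(P)=\phi(P)^\perp$. For (f), given $P\perp Q$ in $\idem(\ca{A}_\rho)$, (b) gives $\phi(P),\phi(Q)\le \phi(P+Q)$, while (d) makes $\phi(P)+\phi(Q)$ an idempotent $\le\phi(P+Q)$; comparing ranks using (c) gives $r(\phi(P)+\phi(Q))=r(\phi(P+Q))$, and a subidempotent of the same rank must coincide with the ambient one.

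Finally, for (g), set $X:=\sum_j\lambda_j P_j$. Orthogonality of the $P_j$ yields the identities $X\bullet P_k=\lambda_k P_k$ and $X\bullet\sum_k P_k=X$ inside $\ca{A}_\rho$. Applying $\phi$, using bilinearity of $\bullet$ in its second argument, and invoking iterated (f) together with (d) to identify $\sum_k\phi(P_k)=\phi(\sum_k P_k)$, I compute
$$\sum_k \phi(\lambda_k P_k)=\sum_k \phi(X)\bullet \phi(P_k)=\phi(X)\bullet\sum_k\phi(P_k)=\phi(X)\bullet\phi\Bigl(\sum_k P_k\Bigr)=\phi(X).$$
The main technical obstacle is clearly (c): the chain construction through a fixed idempotent, which relies crucially on Lemma \ref{le:idempotents sum of rank one}, and the rank-pigeonhole argument that pins down $\phi(0)$, $\phi(I)$, and the rank of every image idempotent simultaneously. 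Every subsequent part bootstraps off of this.
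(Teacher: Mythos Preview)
Your proof is correct and follows essentially the same approach as the paper. The only cosmetic difference is in part (c): the paper invokes Theorem~\ref{thm:inner diagonalization on SMA} directly to place $P$ in a maximal chain of idempotents, whereas you obtain the chain via Lemma~\ref{le:idempotents sum of rank one} applied to $P$ and $P^\perp$; since that lemma is itself proved using Theorem~\ref{thm:inner diagonalization on SMA}, the underlying argument is the same.
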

	\begin{proof}
		\begin{enumerate}[(a)]
			\item This is clear.
			\item We have $$\phi(P) = \phi(P \bullet Q) = \phi(P) \bullet \phi(Q), \qquad \phi(P) = \phi(Q \bullet P) = \phi(Q) \bullet \phi(P),$$
			which is (by Lemma \ref{le:Jordan product calculations} if necessary) equivalent to  $\phi(P) \le \phi(Q)$. 
			\item In view of Theorem  \ref{thm:inner diagonalization on SMA}, note that for any $P,Q \in \idem(M_n)$ we have that $P \le Q$ implies $r(P) \le r(Q)$ with equality if and only if $P = Q$. By the same theorem, any $P\in\idem(\ca{A_\rho})$ is part of a strictly  $\le$-increasing chain of idempotents 
			$$0 = P_0 \lneq P_1 \lneq \cdots \lneq P_n = I$$ 
			in $\idem(\ca{A}_\rho)$. By (b) and the injectivity of $\phi$ it follows
			$$\phi(P_0) \lneq \phi(P_1) \lneq \cdots \lneq\phi(P_n)$$
			and hence 
			$$r(\phi(P_0)) < r(\phi(P_1)) < \cdots < r(\phi(P_n)).$$
			Clearly, for each $0 \le j \le n$ we have $r(\phi(P_j)) = j = r(P_j)$.
			\item We have
			$$\phi(P) \bullet \phi(Q) = \phi(P \bullet Q) = \phi(0) = 0, \qquad \phi(Q) \bullet \phi(P) = \phi(Q \bullet P) = \phi(0) = 0$$
			so (again by Lemma \ref{le:Jordan product calculations} if necessary) $\phi(P) \perp \phi(Q)$.
			\item In view of (c) and (d), we have that $\phi(P^\perp)$ is an idempotent orthogonal to $\phi(P)$ of rank $r(P^\perp)=r(\phi(P)^\perp)$. Therefore $\phi(P^\perp) \le \phi(P)^\perp$, so the equality of ranks implies $\phi(P^\perp) =\phi(P)^\perp$ (as already noted in (c)).
			
			\item Since $P\perp Q$, we have that $P+Q$ is again an idempotent and $P,Q \le P+Q$. Statements (b) and (d) imply $$\underbrace{\phi(P), \phi(Q)}_{\text{orthogonal}} \le \phi(P+Q)$$ and hence
			$$\phi(P)+\phi(Q) \le \phi(P+Q).$$
			Finally, we have
			\begin{align*}
				r(\phi(P) + \phi(Q)) &= r(\phi(P)) + r(\phi(Q)) \stackrel{(c)}= r(P) + r(Q) = r(P+Q) \\
				&\leftstackrel{(c)}= r(\phi(P+Q)),
			\end{align*}
			so equality follows.
			\item We have \begin{align*}
				\phi\left(\sum_{j=1}^r \lambda_j P_j\right) &= \phi\left(\left(\sum_{j=1}^r \lambda_j P_j\right) \bullet \left(\sum_{l=1}^r  P_l\right)\right) = \phi\left(\sum_{j=1}^r \lambda_j P_j\right) \bullet \phi\left(\sum_{l=1}^r P_l\right)\\
				&\leftstackrel{(f)}= \phi\left(\sum_{j=1}^r \lambda_j P_j\right) \bullet \left(\sum_{l=1}^r \phi(P_l)\right) = \sum_{l=1}^r \left(\phi\left(\sum_{j=1}^r \lambda_j P_j\right) \bullet \phi(P_l)\right)\\
				&= \sum_{l=1}^r \phi\left(\left(\sum_{j=1}^r \lambda_j P_j\right) \bullet P_l\right) = \sum_{l=1}^r \phi(\lambda_l P_l).
			\end{align*}
		\end{enumerate}
	\end{proof}

	\begin{proof}[Proof of Theorem \ref{thm:main result}]
		First of all, if $\ca{A}$ and $\ca{B}$ are arbitrary algebras over a field $\F$ of characteristic not $2$, note that all $\diamond$-preserving injective (respectively, surjective or bijective) maps $\ca{A}\to \ca{B}$ are automatically additive if and only if the same holds true for the corresponding $\circ$-preserving maps. Indeed, if  for example $\phi: \ca{A}\to \ca{B}$ is $\diamond$-preserving, then the map $\psi : \ca{A} \to \ca{B}$ defined by
		$$
		\psi(x):=2 \phi\left(\frac{x}{2}\right), \quad  \text{ for all } x \in \ca{A}
		$$
		is clearly $\circ$-preserving.  It therefore suffices to establish the result in the $\circ$-preserving case.
		
		\smallskip
		
		\fbox{(iii) $\implies$ (ii)} This is obvious.
		
		\smallskip
		
		\fbox{(ii) $\implies$ (i)} Suppose that (i) is not true. In the context of the central decomposition \eqref{eq:central decomposition} of SMAs, this precisely means that $\ca{A}_\rho$ contains a central summand isomorphic to $\C$. Denote by $\omega : \C \to \C$ some injective multiplicative function which is not additive (see e.g.\ \cite{Molnar}). Then one can construct a map $\ca{A}_\rho \to \ca{A}_\rho$ which acts componentwise as the map $\omega$ on all one-dimensional central summands of $\ca{A}_\rho$, and as the identity map on all other central summands. Such a map is clearly multiplicative and preserves the Jordan product, but is not additive.
		
		\smallskip

		\fbox{(i) $\implies$ (iii)} This implication is the core of the theorem and its proof will be divided into several steps. Let $\phi : \ca{A}_\rho \to M_n$ be an arbitrary injective $\circ$-preserving map. First of all, as $E_{11}, \ldots, E_{nn}$ is a mutually orthogonal family of rank-one idempotents in $\ca{A}_\rho$, by Lemma \ref{le:basic properties} the same is true for idempotents $\phi(E_{11}), \ldots, \phi(E_{nn})$ in $M_n$. Hence, without loss of generality we can therefore assume that
		$$\phi(E_{jj}) = E_{jj}, \quad  \text{ for all } j \in [n].$$
		Then, by the orthoadditivity of $\phi$ on $\idem(\ca{A_\rho})$ (Lemma \ref{le:basic properties} (f)), we also have 
		\begin{equation}\label{eq:identity on diagonal idempotents}
			\phi(P) = P, \quad  \text{ for all } P \in \ca{D}_n \cap \idem(\ca{A_\rho}).
		\end{equation}
		
		\begin{claim}\label{cl:preserves support}
			Let $S \subseteq [n]$ and suppose that a matrix $X \in \ca{A}_\rho$ satisfies $\supp X \subseteq S \times S$. Then $\phi(X)$ satisfies the same property.
		\end{claim}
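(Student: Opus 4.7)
The plan is to exploit the diagonal idempotent $P_S = \sum_{i \in S} E_{ii}$, which lies in $\ca{D}_n \subseteq \ca{A}_\rho$ and, by the normalization \eqref{eq:identity on diagonal idempotents}, satisfies $\phi(P_S) = P_S$. The key observation is that, for a matrix $X \in M_n$, the support condition $\supp X \subseteq S \times S$ is equivalent to the identities $P_S X = X P_S = X$, which in turn (since $P_S X P_S$ is then automatically $X$) is equivalent to $P_S \circ X = X$ by Lemma \ref{le:Jordan product calculations}(b).

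Given $X \in \ca{A}_\rho$ with $\supp X \subseteq S\times S$, I would therefore write $P_S \circ X = X$ and apply $\phi$, using \eqref{eq:bullet preserving} with $\bullet = \circ$ together with $\phi(P_S)=P_S$, to obtain
\[
P_S \circ \phi(X) = \phi(P_S) \circ \phi(X) = \phi(P_S \circ X) = \phi(X).
\]
Now invoking Lemma \ref{le:Jordan product calculations}(b) in the other direction gives $P_S \phi(X) = \phi(X) P_S = P_S \phi(X) P_S = \phi(X)$, which directly forces $\supp \phi(X) \subseteq S \times S$, since $(P_S \phi(X))_{ij}$ vanishes for $i \notin S$ and $(\phi(X) P_S)_{ij}$ vanishes for $j \notin S$.

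I do not foresee a real obstacle here: the argument is a clean transport of the absorption identity $P_S \circ X = X$ through $\phi$, made possible by the fact that $P_S$ is a diagonal idempotent on which $\phi$ acts trivially. The only thing to be mindful of is that Lemma \ref{le:Jordan product calculations} is invoked twice, in opposite directions, so the normalized Jordan product setting is used in an essential way; the case where $\bullet$ is ordinary matrix multiplication is covered by the initial reduction at the beginning of the proof of Theorem \ref{thm:main result}.
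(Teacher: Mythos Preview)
Your proof is correct and follows essentially the same approach as the paper: the paper uses the complementary idempotent $P_S^\perp$ together with Lemma \ref{le:Jordan product calculations}(a) (the characterization via $P_S^\perp X = X P_S^\perp = 0$), whereas you use $P_S$ directly with Lemma \ref{le:Jordan product calculations}(b) (the characterization via $P_S X = X P_S = X$); these are dual formulations of the same transport argument. One small inaccuracy in your closing remark: the reduction at the start of the proof of Theorem \ref{thm:main result} only passes from $\diamond$ to $\circ$, not from multiplication to $\circ$, so the multiplicative case is handled in parallel throughout rather than reduced away --- but your argument works verbatim for $\bullet = \cdot$ (indeed, it is simpler there since no appeal to Lemma \ref{le:Jordan product calculations} is needed), so this does not affect correctness.
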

		\begin{claimproof}
			Consider the diagonal idempotent $P:=P_S^\perp \in \idem(\ca{A}_\rho)$ (where  $P_S \in \idem(\ca{A_\rho})$ is defined by \eqref{eq:P_S}). Note that a matrix $X$ is supported in $S \times S$ if and only if $XP=PX=0$. Then obviously  $X \bullet P = P \bullet X = 0$, so 
			$$0 = \phi(X \bullet P) = \phi(X) \bullet \phi(P) \stackrel{\eqref{eq:identity on diagonal idempotents}}= \phi(X) \bullet P$$
			and similarly $0 = P \bullet \phi(X)$ which together (by Lemma \ref{le:Jordan product calculations} (a)) imply the claim.
		\end{claimproof}
		
		\begin{claim}\label{cl:h exists}
			For each central class $C \in \ca{Q}$, there exists a unique injective map $\omega_C : \C \to \C$ such that 
			\begin{equation}\label{eq:homogeneity}
				\phi(\lambda X) = \omega_C(\lambda) \phi(X), \quad  \text{ for all } \lambda \in \C \text{ and } X \in \ca{A}_\rho \text{ with } \supp X \subseteq C \times C.
			\end{equation}
		\end{claim}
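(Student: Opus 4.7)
The plan is to define $\omega_C$ from the behaviour of $\phi$ on the diagonal idempotents $E_{ii}$, $i \in C$, and then to lift the resulting scalar identity to arbitrary $X$ supported in $C \times C$ by means of the central idempotent $P_C$. For fixed $i \in [n]$ and $\lambda \in \C$, Claim \ref{cl:preserves support} applied with $S = \{i\}$ shows that $\phi(\lambda E_{ii})$ is supported at $(i,i)$, so there is a unique scalar $\omega_i(\lambda) \in \C$ with $\phi(\lambda E_{ii}) = \omega_i(\lambda) E_{ii}$. The map $\omega_i \colon \C \to \C$ is automatically injective (since $\phi$ is), satisfies $\omega_i(0) = 0$, $\omega_i(1) = 1$, and is multiplicative, as follows by applying $\phi$ to $(\lambda E_{ii}) \circ (\mu E_{ii}) = \lambda\mu E_{ii}$.

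The main technical step is to show $\omega_i = \omega_j$ whenever $i \tripprox j$. Since $\tripprox$ is the transitive closure of $\tripprox_0$, it suffices to handle the base case $(i,j) \in \rho^\times$. Write $A := \phi(E_{ij})$; by Claim \ref{cl:preserves support}, $A$ is supported in $\{i,j\} \times \{i,j\}$, so $A = \alpha E_{ii} + \beta E_{ij} + \gamma E_{ji} + \delta E_{jj}$. Comparing $E_{ii} \circ A = \phi(\tfrac12 E_{ij}) = E_{jj} \circ A$, obtained by applying $\phi$ to $E_{ii} \circ E_{ij} = \tfrac12 E_{ij} = E_{jj} \circ E_{ij}$, forces $\alpha = \delta = 0$. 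The relation $E_{ij} \circ E_{ij} = 0$ then gives $A^2 = 0$, whence $\beta\gamma = 0$, and $A \neq 0$ by the injectivity of $\phi$; in particular $A$ is a nonzero scalar multiple of a single matrix unit, and a direct computation shows $E_{ii} \circ A = E_{jj} \circ A = \tfrac12 A$. Applying $\phi$ to $(\lambda E_{ii}) \circ E_{ij} = (\lambda E_{jj}) \circ E_{ij}$ therefore yields $\omega_i(\lambda) A = \omega_j(\lambda) A$, and cancelling the nonzero $A$ gives $\omega_i = \omega_j$.

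Setting $\omega_C := \omega_i$ for any $i \in C$, the homogeneity identity then follows by a short calculation. For $X \in \ca{A}_\rho$ supported in $C \times C$ we have $P_C X = X P_C = X$, so $(\lambda P_C) \circ X = \lambda X$. Applying $\phi$, and using Lemma \ref{le:basic properties}(g) on the mutually orthogonal family $(E_{ii})_{i \in C}$ to get $\phi(\lambda P_C) = \sum_{i \in C} \omega_C(\lambda) E_{ii} = \omega_C(\lambda) P_C$, together with the fact that Claim \ref{cl:preserves support} forces $P_C \circ \phi(X) = \phi(X)$, gives
\[
\phi(\lambda X) = \phi(\lambda P_C) \circ \phi(X) = \omega_C(\lambda)\bigl(P_C \circ \phi(X)\bigr) = \omega_C(\lambda)\phi(X).
\]
Uniqueness of $\omega_C$ is immediate from specializing to $X = E_{ii}$ for any $i \in C$. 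The main obstacle is the base case above: extracting from the support constraint, the nilpotency $A^2 = 0$, and the two Jordan-product identities that $A$ is a scalar multiple of a single matrix unit, which is precisely what enables the cancellation identifying $\omega_i$ with $\omega_j$ and which foreshadows the identity-versus-transposition dichotomy appearing in Theorem \ref{thm:main result}(iii).
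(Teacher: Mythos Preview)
Your argument is correct and complete (for the $\circ$-case; the multiplicative case is the same but easier, and strictly speaking you should mention it since the ambient claim covers both). However, it takes a genuinely different route from the paper.

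The paper works abstractly with arbitrary rank-one idempotents $P \in \idem(\ca{A}_\rho)$: it first shows $\phi(\lambda P) \propto \phi(P)$ via Lemma~\ref{le:Jordan product calculations}(b), obtaining a map $\omega^P$, and then proves the key propagation rule ``$P \not\perp Q \implies \omega^P = \omega^Q$'' by a one-line computation with $\phi(P) \bullet \phi(Q) \neq 0$. The bridge between $\omega^{E_{ii}}$ and $\omega^{E_{jj}}$ is then the auxiliary rank-one idempotent $E_{ii}+E_{ij}$, which is non-orthogonal to both. Your approach instead bypasses general rank-one idempotents entirely: you define $\omega_i$ only on the diagonal units (using Claim~\ref{cl:preserves support} with $S=\{i\}$) and use the off-diagonal image $A=\phi(E_{ij})$ itself as the bridge, after pinning it down to a multiple of $E_{ij}$ or $E_{ji}$ via the support constraint, the nilpotency $A^2=0$, and the identity $E_{ii}\circ A = E_{jj}\circ A$.

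What each approach buys: the paper's method is cleaner and keeps the claims modular --- it never needs to know what $\phi(E_{ij})$ looks like, deferring that entirely to Claim~\ref{cl:matrix units}. Your method is more direct and self-contained, but it front-loads essentially half of the computation in Claim~\ref{cl:matrix units} (as you yourself note in the last sentence). The final lifting step via $\phi(\lambda P_C)=\omega_C(\lambda)P_C$ and $P_C \circ \phi(X) = \phi(X)$ is the same in both proofs.
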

		\begin{claimproof}
			Let $P \in \idem(\ca{A_\rho})$ be a rank-one idempotent and let $\lambda \in \C^\times$. Then
			$$\phi(\lambda P) = \phi((\lambda P) \bullet P) = \phi(\lambda P) \bullet \phi(P), \qquad \phi(\lambda P) = \phi(P \bullet (\lambda P)) = \phi(P) \bullet \phi(\lambda P)$$
			In view of Lemma \ref{le:Jordan product calculations} (b) we have $$\phi(\lambda P) = \phi(P)\phi(\lambda P)\phi(P).$$
			Since $\phi(\lambda P) \ne 0$ by injectivity, it follows that $\phi(\lambda P)$ has rank one, and shares the same image and kernel as $\phi(P)$ so we conclude $\phi(\lambda P) \propto \phi(P)$. Since $\phi(0) = 0$, it follows that there exists a map $\omega^P : \C \to \C$ such that $$\phi(\lambda P) = \omega^P(\lambda)\phi(P), \quad  \text{ for all } \lambda \in \C.$$
			Note that if $P, Q \in \idem(\ca{A_\rho})$ are two rank-one idempotents, we have
			\begin{equation}\label{eq:equal when they are not orthogonal}
				P \not\perp Q \implies \omega^P = \omega^Q. 
			\end{equation}
			Namely, supposing that $P \bullet Q \ne 0$, we have
			$$\phi(P) \bullet \phi(Q)  = \phi(P \bullet Q) \ne 0$$
			and hence for each $\lambda \in \C$ we have
			\begin{align*}
				\omega^Q(\lambda) (\phi(P) \bullet \phi(Q)) &= \phi(P) \bullet \phi(\lambda Q) = \phi(P \bullet (\lambda Q)) \\
				&= \phi((\lambda P) \bullet Q) = \phi(\lambda P) \bullet \phi(Q) \\
				&= \omega^P(\lambda) (\phi(P) \bullet \phi(Q)),
			\end{align*}
			so it follows $\omega^Q = \omega^P$.
			The next objective is to show that for each $C \in \ca{Q}$, the map $\omega^P$ is in fact the same for all rank-one idempotents $P \in \idem(\ca{A_\rho})$ supported in $C \times C$. As a first step, note that for any two distinct $i,j \in [n]$ we have
			$$i \tripprox j \implies \omega^{E_{ii}} = \omega^{E_{jj}}.$$
			Indeed, by transitivity it suffices to prove this assuming $i \tripprox_0 j$, i.e.\ when $(i,j) \in \rho$ or $(j,i)\in\rho$. For concreteness assume the former. Then the rank-one idempotent $ E_{ii} + E_{ij} \in \idem(\ca{A_\rho})$ satisfies
			$$E_{ii} \not\perp E_{ii} + E_{ij} \not\perp E_{jj} \stackrel{\eqref{eq:equal when they are not orthogonal}}\implies  \omega^{E_{ii}} = \omega^{E_{ii} + E_{ij}} =\omega^{E_{jj}}.$$
			Fix now some $C \in \ca{Q}$ and let $P \in \idem(\ca{A_\rho})$ be an arbitrary rank-one idempotent such that $\supp P \subseteq C \times C$. As $\Tr(P)=1$, we can pick some $(i,i) \in \supp P$. We have
			$$P \not\perp E_{ii} \stackrel{\eqref{eq:equal when they are not orthogonal}}\implies \omega^P = \omega^{E_{ii}}.$$
			So indeed we see that the map $P \mapsto \omega^P$ is constant on the set of all rank-one idempotents of $\ca{A}_\rho$ supported in $C \times C$, and will henceforth be denoted by $\omega_C$. 
			
			\smallskip
			
			\noindent  Now we prove \eqref {eq:homogeneity}. First of all, if $X$ is an idempotent, then \eqref{eq:homogeneity} follows from the fact that  $X$ can be decomposed as a sum of mutually orthogonal rank-one idempotents supported in $C \times C$ (Lemma \ref{le:idempotents sum of rank one}) and then apply the orthoadditivity of $\phi$ on $\idem(\ca{A}_\rho)$. For general $X$, let $P_C \in \idem(\ca{A_\rho})$ be the corresponding central idempotent (defined by \eqref{eq:P_S}). We have
			\begin{align*}
				\phi(\lambda X) &= \phi(X \bullet (\lambda P_C)) = \phi(X) \bullet \phi(\lambda P_C) = \omega_C(\lambda) \phi(X) \bullet \phi(P_C) \\
				&= \omega_C(\lambda) \phi(X \bullet P_C) = \omega_C(\lambda) \phi(X),
			\end{align*}
			which implies \eqref{eq:homogeneity}. The injectivity of $\phi$ clearly implies the injectivity of $\omega_C$.
		\end{claimproof}
		
		\begin{claim}\label{cl:h is multiplicative}
			For each central class $C \in \ca{Q}$, the map $\omega_C$ is multiplicative.
		\end{claim}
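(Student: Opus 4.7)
The plan is to reduce the multiplicativity of $\omega_C$ to a single identity obtained by applying the $\circ$-preserving property \eqref{eq:bullet preserving} to two scalar multiples of a carefully chosen rank-one idempotent. Since every central class is nonempty, I would fix any $i \in C$ and work with the diagonal matrix unit $E_{ii}$, which is a rank-one idempotent supported in $C \times C$. By the normalization $\phi(E_{ii}) = E_{ii}$ adopted at the start of the proof of (i)~$\implies$~(iii), together with Claim~\ref{cl:h exists}, we then obtain the convenient formula
$$
\phi(\lambda E_{ii}) = \omega_C(\lambda)\, E_{ii}, \quad \text{for all } \lambda \in \C.
$$

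Next, for arbitrary $\lambda,\mu \in \C$, I would use $E_{ii}^2 = E_{ii}$ to compute $(\lambda E_{ii}) \circ (\mu E_{ii}) = \lambda\mu\, E_{ii}$, and then apply $\phi$. On one side this gives $\phi(\lambda\mu E_{ii}) = \omega_C(\lambda\mu)\, E_{ii}$, while the $\circ$-preserving property and the formula above yield
$$
\phi(\lambda E_{ii}) \circ \phi(\mu E_{ii}) = \bigl(\omega_C(\lambda) E_{ii}\bigr) \circ \bigl(\omega_C(\mu) E_{ii}\bigr) = \omega_C(\lambda)\,\omega_C(\mu)\, E_{ii}.
$$
Comparing the coefficients of $E_{ii}$ immediately yields $\omega_C(\lambda\mu) = \omega_C(\lambda)\,\omega_C(\mu)$, which is precisely the multiplicativity of $\omega_C$.

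There is no real obstacle here: the claim is a one-identity consequence of Claim~\ref{cl:h exists}, the chosen normalization, and the $\circ$-preserving hypothesis evaluated on a single pair of elements supported in $C \times C$. The opening reduction at the beginning of the proof of (i)~$\implies$~(iii) ensures that treating the case $\bullet = \circ$ suffices to cover the (normalized and standard) Jordan product cases uniformly.
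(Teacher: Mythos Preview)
Your proof is correct and essentially identical to the paper's own argument: both fix $i\in C$, use Claim~\ref{cl:h exists} together with the normalization $\phi(E_{ii})=E_{ii}$ to get $\phi(\lambda E_{ii})=\omega_C(\lambda)E_{ii}$, and then evaluate the $\bullet$-preserving identity on the pair $(\lambda E_{ii},\mu E_{ii})$ to read off $\omega_C(\lambda\mu)=\omega_C(\lambda)\omega_C(\mu)$. The only cosmetic difference is that the paper writes the computation with the generic symbol $\bullet$ rather than specializing to $\circ$.
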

		\begin{claimproof}
			Fix $C \in \ca{Q}$, an arbitrary $i \in C$ and $\lambda, \mu \in \C$. By Claim \ref{cl:h exists} we have
			\begin{align*}
				\omega_C(\lambda\mu)E_{ii} &= \phi((\lambda\mu) E_{ii}) = \phi((\lambda E_{ii}) \bullet (\mu E_{ii})) = \phi(\lambda E_{ii}) \bullet \phi(\mu E_{ii})\\
				&= \omega_C(\lambda)\omega_C(\mu)E_{ii},
			\end{align*}
			which implies $\omega_C(\lambda\mu) = \omega_C(\lambda)\omega_C(\mu)$, as desired.
		\end{claimproof}
		
		\begin{claim}\label{cl:matrix units}
			Fix a central class  $C \in \ca{Q}$. Then
			$$\phi(E_{ij}) \propto E_{ij}, \quad  \text{ for all } (i,j) \in \rho \cap (C \times C)$$
			(always the case when $\phi$ is assumed to be multiplicative) or
			$$\phi(E_{ij}) \propto E_{ji}, \quad  \text{ for all } (i,j) \in \rho \cap (C \times C).$$
		\end{claim}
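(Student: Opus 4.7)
The argument naturally splits on the choice of $\bullet$. For the multiplicative case, the argument delivers only the first alternative and is immediate: for $(i,j) \in \rho^\times$, the identities $E_{ii}E_{ij} = E_{ij} = E_{ij}E_{jj}$ and $E_{ij}E_{ii} = E_{jj}E_{ij} = 0$, together with $\phi(E_{ii}) = E_{ii}$ for all $i$, force $\phi(E_{ij}) = E_{ii}\phi(E_{ij})E_{jj}$ with $E_{jj}\phi(E_{ij}) = \phi(E_{ij})E_{ii} = 0$, so $\phi(E_{ij})$ lives in the single entry $(i,j)$ and hence is a scalar multiple of $E_{ij}$. By the reduction to the $\circ$-preserving case at the very start of the proof of Theorem~\ref{thm:main result}, the remaining work is only for $\bullet = \circ$.

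Fix now $(i,j) \in \rho^\times \cap (C\times C)$ and write $\phi(E_{ij})$ as a block matrix $\begin{pmatrix}a&b\\c&d\end{pmatrix}$ on the rows and columns indexed by $\{i,j\}$; the other entries vanish by Claim~\ref{cl:preserves support} with $S=\{i,j\}$. The first step will be to apply $\phi$ to $E_{ii}\circ E_{ij} = \tfrac12 E_{ij}$ and invoke Claim~\ref{cl:h exists} to get $E_{ii}\circ\phi(E_{ij}) = \omega_C(\tfrac12)\phi(E_{ij})$. Comparing entries yields
$$(1-\omega_C(\tfrac12))a = 0,\qquad (\tfrac12-\omega_C(\tfrac12))b = 0,\qquad (\tfrac12-\omega_C(\tfrac12))c = 0,\qquad \omega_C(\tfrac12)d = 0.$$
Since $\omega_C$ is multiplicative and injective with $\omega_C(1)=1$ (so $\omega_C(\tfrac12)\notin\{0,1\}$), we obtain $a=d=0$; moreover if $\omega_C(\tfrac12)\neq\tfrac12$ then $b=c=0$ as well, contradicting injectivity of $\phi$. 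Hence $\omega_C(\tfrac12)=\tfrac12$ and $\phi(E_{ij})=bE_{ij}+cE_{ji}$. Applying $\phi$ next to $E_{ij}\circ E_{ij}=0$ yields $\phi(E_{ij})^2=0$, and $(bE_{ij}+cE_{ji})^2=bc(E_{ii}+E_{jj})$ forces $bc=0$. Thus each $\phi(E_{ij})$ is of one of two mutually exclusive types: type~$1$ ($\propto E_{ij}$) or type~$2$ ($\propto E_{ji}$).

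The hardest step is to establish uniformity of type across $\rho^\times \cap (C\times C)$. Let $\ca{S}$ denote the set of type~$1$ edges in $C$; if $\ca{S}\neq\emptyset$, I plan to verify the three closure hypotheses of Lemma~\ref{le:contains entire CxC} by a short case analysis using $E_{ij}\circ E_{ik}=0$ for distinct $i,j,k$, $E_{lj}\circ E_{ij}=0$ for distinct $i,j,l$, and $E_{ij}\circ E_{ji}=\tfrac12(E_{ii}+E_{jj})$ (the last image being fixed by $\phi$ thanks to \eqref{eq:identity on diagonal idempotents}). In each case, assuming the partner edge to be of the opposite type from $(i,j)$ and computing the corresponding product directly in coordinates produces a nonzero matrix unit — for instance, $\phi(E_{ij})\circ\phi(E_{ik})=\tfrac{\alpha b}{2}E_{kj}$ when $(i,j)\in\ca{S}$ with $\phi(E_{ij})=bE_{ij}$ but $\phi(E_{ik})=\alpha E_{ki}$ — contradicting the actual image under $\phi$ of $E_{ij}\circ E_{ik}=0$ (or $\tfrac12(E_{ii}+E_{jj})$, in the reversal case). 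Lemma~\ref{le:contains entire CxC} then delivers $\ca{S}=\rho^\times \cap (C\times C)$, so every edge in $C$ is of type~$1$; the case $\ca{S}=\emptyset$ symmetrically forces every edge to be of type~$2$. The principal obstacle is this final bookkeeping step, since each closure hypothesis must be paired with the right identity $E_{ab}\circ E_{cd}$ for the opposite-type assumption to produce a visible nonzero product.
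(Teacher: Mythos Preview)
Your argument is correct and follows essentially the same route as the paper: restrict to the $\{i,j\}$-block via Claim~\ref{cl:preserves support}, use $E_{ii}\circ E_{ij}=\tfrac12 E_{ij}$ together with nilpotency to force $\phi(E_{ij})\propto E_{ij}$ or $E_{ji}$, and then propagate the choice across $C$ by verifying the hypotheses of Lemma~\ref{le:contains entire CxC} with the same three identities the paper uses. One small wording issue: $\tfrac12(E_{ii}+E_{jj})$ is not an idempotent, so \eqref{eq:identity on diagonal idempotents} alone does not say it is fixed by $\phi$; you need to combine it with Claim~\ref{cl:h exists} and your already-established fact $\omega_C(\tfrac12)=\tfrac12$ (or simply observe that the image is nonzero by injectivity, which is all the contradiction requires).
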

		\begin{claimproof}
			Fix some $(i,j) \in \rho^\times \cap (C \times C)$. By Claim \ref{cl:preserves support}, we have $\supp \phi(E_{ij}) \subseteq \{i,j\} \times \{i,j\}$ so denote
			$$\phi (E_{ij}) = \sum_{(r,s) \in \{i,j\} \times \{i,j\}}\alpha_{rs} E_{rs}, \quad \alpha_{rs} \in \C$$
			On one hand we have
			\begin{equation}\label{eq:Eij nilpotent}
				0 = \phi(E_{ij} \bullet E_{ij}) = \phi(E_{ij}) \bullet \phi(E_{ij}) = \phi(E_{ij})^2,
			\end{equation}
			and on the other
			\begin{align*}
				\omega_C\left(\frac12\right)\phi(E_{ij}) \quad &\leftstackrel{\text{Claim } \ref{cl:h exists}}= \phi\left(\frac12 E_{ij}\right) = \phi(E_{ii} \circ E_{ij})\stackrel{\eqref{eq:identity on diagonal idempotents}}= E_{ii}\circ \phi(E_{ij}) \\
				&=\frac12 \alpha_{ij}E_{ij} +\frac12 \alpha_{ji}E_{ji} + \alpha_{ii}E_{ii}
			\end{align*} (if $\phi$ preserves $\circ$)
			or
			\begin{align*}
				\phi\left( E_{ij}\right) = \phi(E_{ii}E_{ij})\stackrel{\eqref{eq:identity on diagonal idempotents}}= E_{ii}\phi(E_{ij}) =\alpha_{ij}E_{ij} + \alpha_{ii}E_{ii}
			\end{align*}
			(if $\phi$ is multiplicative).
			In either case, by \eqref{eq:Eij nilpotent} it ultimately follows $\alpha_{ii} = \alpha_{jj} = 0$ and (by injectivity)
			$$\phi(E_{ij}) \propto \begin{cases}
				E_{ij} \text{ or } E_{ji}, &\quad\text{ if }\bullet = \circ \\
				E_{ij}, &\quad\text{ if } \bullet = \cdot.
			\end{cases}$$
			It remains to prove that in the case $\bullet = \circ$, the same option happens for each pair in $\rho \cap (C \times C)$. For the sake of concreteness, assume that $\phi(E_{ij}) \propto E_{ij}$. If $(j,i) \in \rho$, then clearly $\phi(E_{ji}) \propto E_{ji}$ as otherwise
			$$\phi\left( \frac12 \left(E_{ii}+E_{jj}\right)\right) = \phi(E_{ij} \circ E_{ji}) = \phi(E_{ij}) \circ \phi(E_{ji}) \propto E_{ij} \circ E_{ij} = 0$$
			is a contradiction. Next we show that $\phi(E_{ik}) \propto E_{ik}$ for any $k \in (\rho^\times)(i)$, and $\phi(E_{lj}) \propto E_{lj}$ for any $l \in (\rho^\times)^{-1}(j)$.
			\begin{itemize}
				\item[--] Suppose that $k \in (\rho^\times)(i) \setminus \{j\}$ and that $\phi(E_{ik}) \propto E_{ki}$. Then 
				$$0 = \phi(E_{ij} \circ E_{ik}) = \phi(E_{ij}) \circ \phi(E_{ik}) \propto E_{ij} \circ E_{ki} = \frac12 E_{kj}$$
				is a contradiction so it must be $\phi(E_{ik})  \propto E_{ik}$.
				\item[--] Suppose that $l \in (\rho^\times)^{-1}(j)\setminus \{i\}$ and that $\phi(E_{lj}) \propto E_{jl}$. Then 
				$$0 = \phi(E_{ij} \circ E_{lj}) = \phi(E_{ij}) \circ \phi(E_{lj}) \propto E_{ij} \circ E_{jl} = \frac12 E_{il}$$
				is a contradiction so it must be $\phi(E_{lj})  \propto E_{lj}$.
			\end{itemize}
			Now we can apply Lemma \ref{le:contains entire CxC} to the set
			$$\ca{S} := \{(r,s)\in \rho^\times : \phi(E_{rs}) \propto E_{rs}\}.$$ 
			Since $(i,j) \in \ca{S}$, the set $\ca{S}$ contains all $(r,s) \in \rho^\times \cap (C \times C)$, which proves the claim.
		\end{claimproof}
		
		In view of Claim \ref{cl:matrix units}, we can define a map $g : \rho \to \C^\times$ which to a pair $(i,j) \in \rho$ assigns a scalar $g(i,j)$ such that
		$$\phi(E_{ij}) = g(i,j)E_{ij} \qquad \text{ or }\qquad \phi(E_{ij}) = g(i,j)E_{ji}$$
		(we know that $g(i,j) = 1$ if $i = j$, and otherwise exactly one option is true).
		
		\begin{claim}
			For each central class $C \in \ca{Q}$, the map $\omega_C$ is additive.
		\end{claim}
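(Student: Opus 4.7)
The plan is to exploit hypothesis~(i): inside each central class $C$, it guarantees an off-diagonal pair $(i,j) \in \rho^\times \cap (C \times C)$. Indeed, since $|C| \ge 2$ and $C$ is an equivalence class for the transitive closure of $\tripprox_0$, any two distinct elements of $C$ are linked by a $\tripprox_0$-chain; choosing two consecutive distinct members of such a chain and relabeling if necessary produces such an ordered pair. Fix such $(i,j)$ and arbitrary $\lambda,\mu \in \C$; the goal is to establish $\omega_C(\lambda + \mu) = \omega_C(\lambda) + \omega_C(\mu)$.

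The core of the argument is to compute $\phi$ explicitly on the two rank-one idempotents
$$X := E_{ii} + \lambda E_{ij}, \qquad Y := E_{jj} + \mu E_{ij},$$
both of which lie in $\ca{A}_\rho$ (a direct check using $E_{ij}^2=0$, $E_{ij}E_{ii}=0$, $E_{jj}E_{ij}=0$) and are supported in $\{i,j\} \times \{i,j\}$; hence so are $\phi(X)$ and $\phi(Y)$ by Claim~\ref{cl:preserves support}. I will compute the auxiliary Jordan products $X \circ E_{jj} = \tfrac{\lambda}{2} E_{ij}$ and $Y \circ E_{ii} = \tfrac{\mu}{2} E_{ij}$, and then combine \eqref{eq:identity on diagonal idempotents} with Claims~\ref{cl:h exists} and~\ref{cl:matrix units} to read off three of the four $\{i,j\}$-block entries of $\phi(X)$ (and of $\phi(Y)$); the remaining diagonal entry is forced by idempotency together with nonvanishing from the injectivity of $\phi$. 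This will yield
$$\phi(X) = E_{ii} + 2\omega_C(\lambda/2)\, g(i,j)\, E, \qquad \phi(Y) = E_{jj} + 2\omega_C(\mu/2)\, g(i,j)\, E,$$
where $E = E_{ij}$ in the first case of Claim~\ref{cl:matrix units} and $E = E_{ji}$ in the second.

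A one-line matrix calculation then gives $X \circ Y = \tfrac{\lambda + \mu}{2} E_{ij}$. Applying $\phi$ and using $\circ$-preservation,
$$\omega_C\!\left(\tfrac{\lambda+\mu}{2}\right) g(i,j)\, E \;=\; \phi(X \circ Y) \;=\; \phi(X) \circ \phi(Y),$$
while a direct $2\times 2$ computation inside the $\{i,j\}$-block (carried out uniformly in either case of $E$, since $E_{ij}^2 = E_{ji}^2 = 0$ and the two non-vanishing cross-terms always have coefficient $g(i,j)$ times $\omega_C(\lambda/2)$ or $\omega_C(\mu/2)$) gives $\phi(X) \circ \phi(Y) = g(i,j)\bigl(\omega_C(\lambda/2) + \omega_C(\mu/2)\bigr) E$. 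Cancelling the common factor $g(i,j) E$ and replacing $\lambda,\mu$ by $2\lambda,2\mu$ then produces the desired identity.

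The principal obstacle is the explicit determination of $\phi(X)$ and $\phi(Y)$: these rank-one idempotents are \emph{not} diagonal, so \eqref{eq:identity on diagonal idempotents} does not apply directly, and one has to pin down their form from the interplay of the support restriction, the Jordan product with a fixed diagonal matrix unit (which determines the two off-diagonal and one diagonal entry of the block), and idempotency (which fixes the last diagonal entry). A secondary bookkeeping complication is that the entire computation must be run in both sub-cases of Claim~\ref{cl:matrix units}; fortunately, the transpose case differs from the non-transpose one only by the formal substitution $E_{ij} \leftrightarrow E_{ji}$, and the final additivity relation comes out identical.
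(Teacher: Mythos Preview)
Your proposal is correct and follows essentially the same approach as the paper: both fix $(i,j) \in \rho^\times \cap (C\times C)$, determine $\phi(E_{ii}+\lambda E_{ij})$ and $\phi(E_{jj}+\mu E_{ij})$ by combining the Jordan product with a diagonal matrix unit, support restriction, and idempotency, and then compare the two evaluations of $\phi$ at the Jordan product of these idempotents to obtain $\omega_C\!\left(\tfrac{\lambda+\mu}{2}\right)=\omega_C\!\left(\tfrac{\lambda}{2}\right)+\omega_C\!\left(\tfrac{\mu}{2}\right)$. The only cosmetic differences are that you treat both alternatives of Claim~\ref{cl:matrix units} in parallel (the paper assumes one without loss of generality) and that you omit the short parallel computation for the multiplicative case, which the paper records separately.
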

		\begin{claimproof}
			Let $C \in \ca{Q}$. By invoking Claim \ref{cl:matrix units}, without losing generality we can assume that $$\phi(E_{ij}) = g(i,j)E_{ij}, \quad  \text{ for all } (i,j) \in \rho \cap (C \times C).$$
			Since $\abs{C} \ge 2$, there exists some $(i,j) \in \rho^\times \cap (C \times C)$. For fixed $x,y \in \C$ consider the idempotents
			$$E_{ii} + x E_{ij}, E_{jj} + y E_{ij} \in \idem(\ca{A}_\rho).$$
			By Claim \ref{cl:preserves support} we have
			$$\supp \phi(E_{ii} + x E_{ij}) \subseteq \{i,j\} \times \{i,j\}.$$
			Denote $$\phi (E_{ii} + x E_{ij}) = \sum_{(r,s) \in \{i,j\} \times \{i,j\}}\alpha_{rs} E_{rs}, \quad \alpha_{rs} \in \C.$$
			Suppose that $\phi$ preserves $\circ$. We have
			\begin{align*}
				\omega_C\left(\frac12 x\right) g(i,j) E_{ij} \quad &\leftstackrel{\text{Claim }\ref{cl:h exists}}= \phi\left(\frac12 x E_{ij}\right) = \phi((E_{ii} + x E_{ij}) \circ E_{jj})\stackrel{\eqref{eq:identity on diagonal idempotents}}= \phi(E_{ii} + x E_{ij}) \circ E_{jj} \\
				&= 
				\frac12 \alpha_{ij}E_{ij} + \frac12 \alpha_{ji}E_{ji} + \alpha_{jj} E_{jj}.
			\end{align*}
			Since $\phi(E_{ii} + x E_{ij})$ is an idempotent and $\omega_C^{-1}(\{0\}) = \{0\}$, we conclude
			$$\alpha_{ij} =
			2 \omega_C\left(\frac12 x\right)  g(i,j), \qquad \alpha_{ji} = \alpha_{jj} = 0, \qquad \alpha_{ii} = 1.$$
			Hence
			$$\phi(E_{ii} + x E_{ij}) = E_{ii} + 2 \omega_C\left(\frac12 x\right)  g(i,j) E_{ij}.$$
			In an analogous way we arrive at the equality
			$$\phi(E_{jj} + y E_{ij}) = 
			E_{jj} + 2 \omega_C\left(\frac12 y\right)  g(i,j) E_{ij}.$$
			We have
			\begin{align*}
				\omega_C\left(\frac{x+y}2\right) g(i,j) E_{ij}\quad &\leftstackrel{\text{Claim }\ref{cl:h exists}}= \phi\left(\frac{x+y}2 E_{ij}\right) = \phi((E_{ii} + x E_{ij}) \circ (E_{jj} + y E_{ij})) \\
				&= \phi(E_{ii} + x E_{ij}) \circ \phi(E_{jj} + y E_{ij}) \\
				&= \left(E_{ii} + 2 \omega_C\left(\frac12 x\right) g(i,j) E_{ij}\right)  \circ \left(E_{jj} + 2 \omega_C\left(\frac12 y\right) g(i,j) E_{ij}\right)\\
				&= \left(\omega_C\left(\frac12 x\right) + \omega_C\left(\frac12 y\right)\right)g(i,j) E_{ij}
			\end{align*}
			and hence
			$$\omega_C\left(\frac{x+y}2\right) = \omega_C\left(\frac12 x\right) + \omega_C\left(\frac12 y\right).$$
			As $x,y \in \C$ were arbitrarily chosen, this closes the proof for $\circ$. 
			
			\smallskip
			
			\noindent   If $\phi$ is a multiplicative map, a similar calculation implies
			$$\phi(E_{ii} + x E_{ij}) = 
			E_{ii} + \omega_C\left(x\right)  g(i,j) E_{ij}, \qquad \phi(E_{jj} + y E_{ij}) = E_{jj} + \omega_C\left(y\right)  g(i,j) E_{ij}$$
			and hence
			\begin{align*}
				\omega_C(x+y) g(i,j)E_{ij} &= \phi((x+y)E_{ij}) = \phi((E_{ii}+xE_{ij})(E_{jj}+yE_{ij})) \\
				&= (\omega_C(x)+\omega_C(y))g(i,j)E_{ij}
			\end{align*}
			which likewise implies the desired claim.
		\end{claimproof}
		It follows that each map $\omega_C : \C \to \C$ is an injective ring endomorphism of $\C$ (i.e.\ a monomorphism), and hence acts as the identity on the subfield $\Q$ of rational numbers.
		
		\begin{claim}\label{cl:Q-linear}
			$\phi$ is a $\Q$-homogeneous map.
		\end{claim}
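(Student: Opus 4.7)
The plan is to first establish the \emph{central decomposition}
$$\phi(X) = \sum_{C \in \ca{Q}} \phi(P_C X), \quad \text{ for all } X \in \ca{A}_\rho,$$
and then deduce $\Q$-homogeneity of $\phi$ by reducing to the individual central blocks, where the required homogeneity is already captured by Claim \ref{cl:h exists}.

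For the displayed identity: since each $P_C$ lies in $Z(\ca{A}_\rho)$, we have $P_C \bullet X = P_C X$ for either choice of $\bullet$. Combined with $\phi(P_C) = P_C$ from \eqref{eq:identity on diagonal idempotents} and the $\bullet$-preservation of $\phi$, this gives $\phi(P_C X) = \phi(P_C) \bullet \phi(X) = P_C \bullet \phi(X)$. Summing over $C \in \ca{Q}$ and using bilinearity of $\bullet$ together with $\sum_{C \in \ca{Q}} P_C = I$, the right-hand side becomes $I \bullet \phi(X) = \phi(X)$. Note also that each $P_C X$ is genuinely supported in $C \times C$, since any $(i,j) \in \rho$ with $i \in C$ forces $j \in C$ by the definition of the central classes.

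The conclusion is then immediate. Fix $q \in \Q$ and $X \in \ca{A}_\rho$. Applying the decomposition to $qX$ and invoking Claim \ref{cl:h exists} on each summand, one obtains
$$\phi(qX) = \sum_{C \in \ca{Q}} \phi(q P_C X) = \sum_{C \in \ca{Q}} \omega_C(q)\, \phi(P_C X).$$
Since each $\omega_C$ is a ring endomorphism of $\C$, it fixes $\Q$, so $\omega_C(q) = q$; the sum then equals $q\phi(X)$ by a second application of the central decomposition. There is no real obstacle here: the only conceptual ingredient is the central decomposition itself, which drops out once one observes that each $P_C$ is central in $\ca{A}_\rho$ and is fixed by $\phi$.
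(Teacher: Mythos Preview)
Your proof is correct. The paper takes a slightly different route: it first shows $\phi(\lambda I) = \lambda I$ for $\lambda \in \Q$ by writing $\lambda I = \sum_{C \in \ca{Q}} \lambda P_C$, invoking Lemma~\ref{le:basic properties}\,(g) to split the sum, and then using $\omega_C(\lambda) = \lambda$; from there $\Q$-homogeneity follows in one line via $\phi(\lambda X) = \phi(X \bullet (\lambda I)) = \phi(X) \bullet (\lambda I) = \lambda\,\phi(X)$. Your argument instead establishes the identity $\phi(X) = \sum_{C \in \ca{Q}} \phi(P_C X)$ up front --- which is exactly the decomposition the paper derives only in its \emph{final} claim --- and then applies Claim~\ref{cl:h exists} blockwise. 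Both approaches rest on the same ingredients ($\phi(P_C) = P_C$, bilinearity of $\bullet$, centrality of $P_C$ in $\ca{A}_\rho$, and $\omega_C|_{\Q} = \id$), so the difference is organizational rather than substantive: you front-load the central decomposition, while the paper routes through the scalar $\lambda I$ and postpones the decomposition to the end.
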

		\begin{claimproof}
			First of all, for $\lambda \in \Q$ we have 
			\begin{align*}
				\phi(\lambda I)&= \phi\left(\sum_{C \in \ca{Q}} \lambda P_C \right)\stackrel{\text{Lemma } \ref{le:basic properties} \text{ (g)}}= \sum_{C \in \ca{Q}} \phi(\lambda P_C)  \stackrel{\text{Claim } \ref{cl:h exists}}=\sum_{C \in \ca{Q}} \omega_C(\lambda) \phi(P_C)\\
				& \leftstackrel{\eqref{eq:identity on diagonal idempotents}} = \sum_{C \in \ca{Q}} \lambda P_C = \lambda I.
			\end{align*}
			Now, for arbitrary $X \in \ca{A}_\rho$ and $\lambda \in \Q$ we have
			$$\phi(\lambda X) = \phi(X \bullet (\lambda I)) = \phi(X) \bullet \phi(\lambda I) = \lambda \phi(X).$$
		\end{claimproof}
		
		\begin{claim}
			The map $g$ is transitive.
		\end{claim}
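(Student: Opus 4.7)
The plan is to prove $g(i,j)g(j,k)=g(i,k)$ for all $(i,j),(j,k)\in\rho$ by expanding $\phi(E_{ij}\bullet E_{jk})=\phi(E_{ij})\bullet\phi(E_{jk})$ in two ways. Since $\rho$ is transitive, $(i,k)\in\rho$ and the three indices $i,j,k$ all lie in the same central class $C\in\ca{Q}$; hence Claim \ref{cl:matrix units} yields a single choice (identity or transposition) that applies uniformly to the matrix units $E_{ij}$, $E_{jk}$, $E_{ik}$, and $\omega_C$ is a ring endomorphism of $\C$ that fixes $\Q$.

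The cases $i=j$ or $j=k$ are immediate from $\phi(E_{ii})=E_{ii}$, which forces $g(i,i)=1$. Assume therefore $i\neq j$ and $j\neq k$. Suppose first $i\neq k$, so $E_{ij}\cdot E_{jk}=E_{ik}$ and $E_{ij}\circ E_{jk}=\tfrac12 E_{ik}$, since the ``reversed'' composition vanishes in both cases. Substituting $\phi(E_{ij})=g(i,j)E_{ij}$, $\phi(E_{jk})=g(j,k)E_{jk}$, $\phi(E_{ik})=g(i,k)E_{ik}$ in the identity branch (or the transposed analogues $g(\cdot,\cdot)E_{ji}$, $g(\cdot,\cdot)E_{kj}$, $g(\cdot,\cdot)E_{ki}$ in the transposition branch), and using $\omega_C(\tfrac12)=\tfrac12$, both sides collapse to the same scalar multiple of $E_{ik}$ (respectively $E_{ki}$), giving the required equality. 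In the transposition branch this hinges on the symmetric identity $E_{kj}E_{ji}=E_{ki}$, $E_{ji}E_{kj}=0$, which mirrors the identity branch.

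It remains to handle $i=k$, that is, to show $g(i,j)g(j,i)=1$ whenever $(i,j),(j,i)\in\rho$. One computes $E_{ij}E_{ji}=E_{ii}$ in the multiplicative case and $E_{ij}\circ E_{ji}=\tfrac12(E_{ii}+E_{jj})$ in the Jordan case. Using Lemma \ref{le:basic properties}(f) together with \eqref{eq:identity on diagonal idempotents} and $\omega_C(\tfrac12)=\tfrac12$, $\phi$ fixes each of these expressions. Comparing with $\phi(E_{ij})\bullet\phi(E_{ji})$ in both the identity and the transposition branches (each of which evaluates to $g(i,j)g(j,i)$ times the corresponding idempotent, again by the symmetric behaviour of the Jordan product under transposition of factors) yields $g(i,j)g(j,i)=1$.

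No substantive obstacle is expected: the argument is a short case analysis. The only mildly delicate point is verifying that the transposition branch produces the same identities as the identity branch, which follows from the symmetry of the (normalized) Jordan product and the vanishing of the cross-term in all the relevant products of matrix units.
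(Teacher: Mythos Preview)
Your argument is correct and matches the paper's approach: both compute $\phi(E_{ij}\bullet E_{jk})$ in two ways, split into the cases $i\neq k$ and $i=k$, and rely on the fact that $\omega_C$ fixes $\Q$ (equivalently, Claim~\ref{cl:Q-linear}) to handle the factor $\tfrac12$ appearing in the Jordan case. The only cosmetic difference is that the paper writes ``for concreteness assume'' the identity branch and dismisses the multiplicative case as ``even shorter'', whereas you spell out the transposition branch and the multiplicative case explicitly.
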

		\begin{claimproof}
			Fix $(i,j), (j,k) \in \rho$. Then $(i,k) \in \rho$ as well. Since $i,j,k \in C$ for some central class $C\in \ca{Q}$, for concreteness assume that
			$$\phi(E_{ij}) = g(i,j)E_{ij}, \qquad \phi(E_{jk}) = g(j,k)E_{jk}, \qquad \phi(E_{ik}) = g(i,k)E_{ik}.$$
			First assume that $\phi$ preserves $\circ$. If $i \ne k$, then
			\begin{align*}
				\frac12 g(i,k)E_{ik} &\stackrel{\text{Claim }\ref{cl:Q-linear}}= \phi\left(\frac12 E_{ik}\right) =\phi(E_{ij} \circ E_{jk}) = \phi(E_{ij}) \circ \phi(E_{jk}) \\
				&= \frac12 g(i,j)g(j,k)E_{ik},
			\end{align*}
			which implies $g(i,k) = g(i,j)g(j,k)$.
			Similarly, if $i = k$, then
			\begin{align*}
				\frac{1}{2}\left(E_{ii} + E_{jj}\right) \qquad &\leftstackrel{\text{Claim }\ref{cl:Q-linear}, \eqref{eq:identity on diagonal idempotents}}= \phi\left(\frac{1}{2}\left(E_{ii} + E_{jj}\right)\right) =\phi(E_{ij} \circ E_{ji}) = \phi(E_{ij}) \circ \phi(E_{ji}) \\
				&= \frac12 g(i,j)g(j,i)(E_{ii} + E_{jj})
			\end{align*}
			which implies $g(i,i) = 1 = g(i,j)g(j,i)$. The proof is even shorter for multiplicative maps.
		\end{claimproof}
		
		Therefore, by passing to the map $(g^*)^{-1} \circ \phi$, without loss of generality we can assume that for each $C \in \ca{Q}$ there exists an assignment $\dagger_C \in \{\text{identity}, \text{transposition}\}$  (always the identity when $\phi$ is multiplicative) so that
		$$\phi(E_{ij}) = E_{ij}^{\dagger_C}, \quad  \text{ for all } (i,j) \in \rho\cap (C \times C).$$
		
		\begin{claim}\label{cl:Jordan triple product}
			Let $X \in \ca{A}_\rho$ and $P \in \idem(\ca{A_\rho})$. Then
			$$\phi(PXP) = \phi(P)\phi(X)\phi(P).$$
		\end{claim}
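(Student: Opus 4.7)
The multiplicative case is immediate: $\phi(PXP) = \phi(PX \cdot P) = \phi(PX)\phi(P) = \phi(P)\phi(X)\phi(P)$. For the remaining case $\bullet = \circ$, the starting point is the Jordan triple identity
\[PXP = 2 P \circ (P \circ X) - P \circ X,\]
valid for any idempotent $P$ in an associative algebra of characteristic $\ne 2$ (a direct computation from $P^2 = P$). Applying this identity to the idempotent $\phi(P)$ (Lemma \ref{le:basic properties}(a)) and to $\phi(X)$, and pushing $\phi$ inside by two applications of $\circ$-preservation on the right-hand side, yields
\[\phi(P)\phi(X)\phi(P) = 2\phi\bigl(P \circ (P \circ X)\bigr) - \phi(P \circ X).\]
Rearranging the Jordan identity as $P \circ (P \circ X) = \tfrac12(PXP + P \circ X)$ and using $\Q$-homogeneity (Claim \ref{cl:Q-linear}), the desired equality $\phi(PXP) = \phi(P)\phi(X)\phi(P)$ becomes equivalent to the particular additivity identity
\[(\ast) \qquad \phi\bigl(PXP + P \circ X\bigr) = \phi(PXP) + \phi(P \circ X).\]

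To prove $(\ast)$ at a stage where general additivity of $\phi$ is not yet available, I first note that $\phi(PXP) \in \phi(P)M_n\phi(P)$. Indeed, from $P \circ (PXP) = PXP$ and $\circ$-preservation one has $\phi(P) \circ \phi(PXP) = \phi(PXP)$, which by Lemma \ref{le:Jordan product calculations}(b) forces $\phi(P)\phi(PXP)\phi(P) = \phi(PXP)$. Both sides of $(\ast)$ therefore live in the $(1,1)$-corner of the Peirce decomposition of $M_n$ relative to $\phi(P)$ after suitable compression, and the problem reduces to matching Peirce components.

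The main obstacle will be extracting the $(1,1)$-Peirce component cleanly. My plan is to reduce first to the case in which $P$ is a diagonal idempotent, by decomposing $P$ into mutually orthogonal rank-one idempotents (Lemma \ref{le:idempotents sum of rank one}) and using Theorem \ref{thm:inner diagonalization on SMA} to diagonalise the commuting family $\{P,P^\perp\}$ inside $\ca{A}_\rho$. In the diagonal case $\phi(P) = P$ by \eqref{eq:identity on diagonal idempotents}, and the relevant matrices have controlled supports by Claim \ref{cl:preserves support}. Combining this with the rational-parameter family of identities produced by iterated $\circ$-preservation (using that any element $Y$ in the Peirce-$\tfrac12$ eigenspace of $P$ satisfies $P \circ Y = \tfrac12 Y$, so repeated $\circ$-products with $P$ halve the Peirce-$\tfrac12$ component at each step), together with $\Q$-homogeneity (Claim \ref{cl:Q-linear}) and the orthoadditivity statements in Lemma \ref{le:basic properties}(f)--(g), should pin down the $(1,1)$-Peirce component on both sides and yield $(\ast)$, whence the claim.
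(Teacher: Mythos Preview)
Your reduction to the additivity identity $(\ast)$ is correct, but as you note, $(\ast)$ is equivalent to the claim itself, so everything rests on the final paragraph --- and that sketch does not close. First, the reduction to diagonal $P$ is unjustified: writing $P = TDT^{-1}$ with $T \in \ca{A}_\rho^\times$ and $D$ diagonal does not reduce the claim for $P$ to the claim for $D$, since at this stage nothing is known about how $\phi$ interacts with conjugation by $T$. Second --- and this is the real obstruction, already present when $P$ is diagonal --- the iterated-halving idea produces, for every $k \ge 0$,
\[
\phi\bigl(PXP + 2^{-k}Z\bigr) \;=\; \phi(P)\,\phi(X)\,\phi(P) + 2^{-k}W,
\]
where $Z$ and $W$ are the Peirce-$\tfrac12$ components of $X$ and $\phi(X)$ relative to $P$ and $\phi(P)$. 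Extracting $\phi(PXP) = \phi(P)\phi(X)\phi(P)$ from this family would require letting $k \to \infty$, but $\phi$ is not assumed continuous. Neither $\Q$-homogeneity nor Lemma~\ref{le:basic properties}(f)--(g) helps, because $PXP$ and $2^{-k}Z$ are not scalar multiples of orthogonal idempotents. One can squeeze out $\phi(Z) = W$ algebraically (apply $\phi(P)^\perp \circ(\cdot)$ to both sides and use Claim~\ref{cl:Q-linear}), but the residual step $\phi(PXP + Z) = \phi(PXP) + \phi(Z)$ is precisely the missing additivity.

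The paper bypasses all of this by choosing a better Jordan identity:
\[
(P - P^\perp)\circ(X \circ P) \;=\; PXP,
\]
which expresses $PXP$ as a \emph{single} Jordan product rather than a difference. The key is that $P - P^\perp = 1\cdot P + (-1)\cdot P^\perp$ is a scalar combination of orthogonal idempotents, so Lemma~\ref{le:basic properties}(g) together with Claim~\ref{cl:Q-linear} and Lemma~\ref{le:basic properties}(e) give $\phi(P - P^\perp) = \phi(P) - \phi(P)^\perp$ outright. One application of $\circ$-preservation then yields
\[
\phi(PXP) \;=\; \bigl(\phi(P) - \phi(P)^\perp\bigr)\circ\bigl(\phi(P)\circ\phi(X)\bigr) \;=\; \phi(P)\,\phi(X)\,\phi(P),
\]
with no Peirce bookkeeping and no limiting argument.
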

		\begin{claimproof}
			This is clearly true for multiplicative maps, so assume that $\phi$ is $\circ$-preserving. One easily verifies the equality
			\begin{equation}\label{eq:P-Pperp}
				(P-P^\perp) \circ (X \circ P) = PXP.
			\end{equation}
			We also have
			$$\phi(P-P^\perp) \stackrel{\text{Lemma } \ref{le:basic properties}}= \phi(P) + \phi(-P^\perp)  \stackrel{\text{Claim }\ref{cl:Q-linear} \text{ and Lemma } \ref{le:basic properties}}= \phi(P) - \phi(P)^\perp.$$
			Hence
			\begin{align*}
				\phi(PXP)\,\, &\leftstackrel{\eqref{eq:P-Pperp}}= \phi(  (P-P^\perp) \circ (X \circ P)) = (\phi(P)-\phi(P)^\perp) \circ (\phi(X) \circ \phi(P)) \\
				&\leftstackrel{\eqref{eq:P-Pperp}}= \phi(P)\phi(X)\phi(P).
			\end{align*}
		\end{claimproof}

		\begin{claim}\label{cl:supported in CxC}
			For all $C \in \ca{Q}$ and $X \in \ca{A}_\rho$ with $\supp X \subseteq C \times C$ we have
			$$\phi(X) = \sum_{(i,j)\in\rho \cap (C \times C)} \omega_C(X_{ij})E_{ij}^{\dagger_C} = \omega_C(X)^{\dagger_C}.$$
		\end{claim}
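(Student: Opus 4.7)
The plan is to use Claim \ref{cl:Jordan triple product} in two different ways: first with $P = E_{ii}$ to pin down the diagonal of $\phi(X)$, and then with $P = E_{ii} + E_{jj}$ to reduce the full claim to the $2\times 2$ setting. I would handle the $2\times 2$ case by applying Claim \ref{cl:Jordan triple product} to the rank-one idempotents $P_x := E_{ii} + x E_{ij} \in \idem(\ca{A}_\rho)$ (and $Q_y := E_{jj} + y E_{ji}$ when $(j,i)\in \rho$), exploiting the explicit formulas
$$\phi(P_x) = E_{ii} + \omega_C(x)\, E_{ij}^{\dagger_C}, \qquad \phi(Q_y) = E_{jj} + \omega_C(y)\, E_{ji}^{\dagger_C},$$
which follow from (the proof of) the additivity claim after the $g$-normalization, once $\omega_C$ is known to be additive.

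Concretely, for each $i \in C$ the identity $E_{ii} X E_{ii} = X_{ii} E_{ii}$, combined with Claim \ref{cl:Jordan triple product} and Claim \ref{cl:h exists}, immediately gives $\phi(X)_{ii} = \omega_C(X_{ii})$. Then, for any distinct $i,j \in C$, the choice $P = E_{ii}+E_{jj}$ in Claim \ref{cl:Jordan triple product} shows that the $\{i,j\}$-block of $\phi(X)$ equals $\phi$ applied to the $\{i,j\}$-block of $X$. Together with Claim \ref{cl:preserves support}, this reduces the statement to verifying it on matrices $Y = a E_{ii} + b E_{jj} + c E_{ij} + d E_{ji}$ (with $c = 0$ if $(i,j) \notin \rho$ and $d = 0$ if $(j,i) \notin \rho$), for which I can write $\phi(Y) = \alpha E_{ii} + \beta E_{jj} + \gamma E_{ij} + \delta E_{ji}$ with $\alpha = \omega_C(a)$ and $\beta = \omega_C(b)$ already known.

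To identify $\gamma$ and $\delta$, I would compute directly $P_x Y P_x = (a + d x) P_x$ (and symmetrically $Q_y Y Q_y = (b + c y) Q_y$). Applying Claim \ref{cl:Jordan triple product} and Claim \ref{cl:h exists}, the resulting identity $\phi(P_x) \phi(Y) \phi(P_x) = \omega_C(a+dx)\phi(P_x)$ expands, using the explicit form of $\phi(P_x)$, as $(\alpha + \omega_C(x) \star) \phi(P_x) = (\omega_C(a) + \omega_C(x) \omega_C(d)) \phi(P_x)$, where $\star$ is $\delta$ in the identity case and $\gamma$ in the transpose case; varying $x$ pins down the relevant off-diagonal coefficient. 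The symmetric $Q_y$-computation yields the remaining one, and both match the entries of $\omega_C(Y)^{\dagger_C}$. In the asymmetric subcase $(j,i)\notin\rho$ (so $Q_y \notin \ca{A}_\rho$ and $d = 0$), the $P_x$-step alone forces $\delta = 0$; the missing entry $\gamma$ is then extracted from $\phi(Y \circ E_{ii}) = \phi(Y) \circ E_{ii}$, using the already-established formula $\phi(a E_{ii} + \tfrac{c}{2} E_{ij}) = \omega_C(a) E_{ii} + \omega_C(\tfrac{c}{2}) E_{ij}^{\dagger_C}$ (obtained from Claim \ref{cl:h exists} together with the $P_x$-formula above).

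The main obstacle is bookkeeping rather than conceptual depth: one must carefully track the two possible assignments $\dagger_C \in \{\mathrm{id}, (\cdot)^t\}$ through each matrix-unit computation, and separately handle the asymmetric subcase in which only one of the families $P_x, Q_y$ lies in $\ca{A}_\rho$. The underlying arithmetic amounts to no more than routine manipulations with at most four matrix units.
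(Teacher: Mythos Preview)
Your approach is correct and genuinely different from the paper's. The paper never passes through the rank-one idempotents $P_x$ at this stage; instead it extracts each entry of $\phi(X)$ directly by applying $\phi$ to short Jordan-product words in matrix units: the identity $(E_{ji}\circ X)\circ E_{ji}=\tfrac12 X_{ij}E_{ji}$ handles the symmetric case $(i,j),(j,i)\in\rho$, and $(E_{ii}\circ X)\circ E_{jj}=\tfrac14(E_{ii}XE_{jj}+E_{jj}XE_{ii})$ handles the asymmetric one. This is shorter and avoids reproving the formula for $\phi(P_x)$ in the transpose case (which the paper only wrote out for $\dagger_C=\id$ during the additivity step). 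Your route, by contrast, recycles more of the earlier machinery---the explicit $\phi(P_x)$ computation and Claim~\ref{cl:Jordan triple product}---and trades a couple of ad hoc Jordan identities for a systematic $2\times 2$ reduction; this is conceptually pleasant but costs you a case split on whether $(j,i)\in\rho$ and a separate device to recover the remaining off-diagonal entry in the asymmetric subcase.

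One small omission: your case analysis covers $(i,j),(j,i)\in\rho$ and $(i,j)\in\rho,(j,i)\notin\rho$, but not the possibility that $i,j\in C$ with \emph{neither} $(i,j)$ nor $(j,i)$ in $\rho$. Then $P_x,Q_y\notin\ca{A}_\rho$ and your mechanism for pinning down $\gamma,\delta$ is unavailable. This is harmless---in that case $Y=aE_{ii}+bE_{jj}$ and Lemma~\ref{le:basic properties}(g) gives $\phi(Y)=\omega_C(a)E_{ii}+\omega_C(b)E_{jj}$ immediately---but you should say so explicitly, since the claim asserts $\phi(X)_{ij}=\phi(X)_{ji}=0$ for such pairs and your $2\times 2$ reduction does not rule out off-diagonal entries of $\phi(Y)$ by itself.
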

		\begin{claimproof}
			We prove the claim for $\bullet = \circ$, as the multiplicative case is similar, only simpler. Fix $C \in \ca{Q}$. For concreteness, assume that $\dagger_C = \id$ and fix some $X \in \ca{A}_\rho$ such that $\supp X \subseteq C \times C$. Clearly, by Claim \ref{cl:preserves support}, $\phi(X)$ is also supported in $C \times C$. Let $(i,j) \in C\times C$. If $i = j$, we have
			\begin{align*}
				\omega_C(X_{ii})E_{ii} = \phi(X_{ii}E_{ii}) = \phi(E_{ii}XE_{ii}) \stackrel{\text{Claim }\ref{cl:Jordan triple product}}= E_{ii}\phi(X)E_{ii} = \phi(X)_{ii}E_{ii},
			\end{align*}
			so $\phi(X)_{ii} = \omega_C(X_{ii})$. Now assume $i \ne j$. Assume first that $(i,j),(j,i) \in \rho$. As $\omega_C$ is multiplicative and acts as the identity on $\Q$, it follows
			\begin{align*}
				\frac12 \omega_C(X_{ij})E_{ji} &= \phi\left(\frac12 X_{ij} E_{ji}\right) = \phi\left(\frac12 E_{ji}XE_{ji}\right) = \phi((E_{ji} \circ X ) \circ E_{ji})\\
				&= (\phi(E_{ji}) \circ \phi(X)) \circ \phi(E_{ji}) = (E_{ji} \circ \phi(X)) \circ E_{ji}\\
				&= \frac12 \phi(X)_{ij}E_{ji},
			\end{align*}
			which implies $\phi(X)_{ij} = \omega_C(X_{ij})$.
			Suppose now that $(i,j) \in \rho$ but $(j,i) \notin \rho$ (so that $X_{ji}= 0$). We have
			\begin{align*}
				\frac14 \omega_C(X_{ij})E_{ij} &= \phi\left(\frac14 X_{ij} E_{ij}\right) = \phi\left(\frac14 X_{ij} E_{ij} + \frac14 X_{ji}E_{ji}\right) \\
				&= \phi\left(\frac14(E_{ii}XE_{jj} + E_{jj}XE_{ii})\right) = \phi((E_{ii} \circ X) \circ E_{jj}) \\
				&= (E_{ii} \circ \phi(X) )\circ E_{jj} = \frac14 \phi(X)_{ij}E_{ij} + \frac14 \phi(X)_{ji}E_{ji},
			\end{align*}
			so $\phi(X)_{ij} = \omega_C(X_{ij})$ and $\phi(X)_{ji} = 0$. Finally, the same calculation also shows that $\phi(X)_{ij} = \phi(X)_{ji} = 0$ for each $i,j \in C$ such that $(i,j),(j,i) \notin \rho$. This proves the claim.
		\end{claimproof}
		
		We are now in the position to finish the proof of the theorem.
		\begin{claim}
			For each $X \in \ca{A}_\rho$, we have
			$$\phi(X) = \sum_{C \in \ca{Q}} \omega_C(P_C X)^{\dagger_C}.$$
		\end{claim}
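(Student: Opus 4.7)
The plan is to use the decomposition $X = \sum_{C \in \ca{Q}} P_C X$ and to identify each summand's image under $\phi$ with one of the block pieces of $\phi(X)$. Since each $P_C \in Z(\ca{A}_\rho)$ is an idempotent, we have $P_C X = P_C X P_C$, so Claim \ref{cl:Jordan triple product} applied with $P = P_C$ gives
$$\phi(P_C X) \;=\; \phi(P_C X P_C) \;=\; P_C \, \phi(X) \, P_C,$$
while $P_C X$ is supported in $C \times C$, so Claim \ref{cl:supported in CxC} yields $\phi(P_C X) = \omega_C(P_C X)^{\dagger_C}$.

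What remains is to verify that $\phi(X) = \sum_{C \in \ca{Q}} P_C \phi(X) P_C$, i.e.\ that the cross-terms $P_C \phi(X) P_{C'}$ vanish for distinct $C, C' \in \ca{Q}$. Since each $P_C$ is central, one checks that $P_C \bullet X = P_C X$ in both the multiplicative and the $\circ$-preserving case, hence
$$\phi(P_C X) \;=\; \phi(P_C \bullet X) \;=\; P_C \bullet \phi(X).$$
In the multiplicative case this reads $P_C \phi(X) = \phi(P_C X) = P_C \phi(X) P_C$, so for $C' \ne C$ we get $P_C \phi(X) P_{C'} = P_C \phi(X) P_C P_{C'} = 0$. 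In the $\circ$-preserving case, combining $P_C \circ \phi(X) = \tfrac{1}{2}(P_C \phi(X) + \phi(X) P_C)$ with $\phi(P_C X) = P_C \phi(X) P_C$ yields $2 P_C \phi(X) P_C = P_C \phi(X) + \phi(X) P_C$, and multiplying on the right by $P_{C'}$ (using orthogonality of the $P_C$'s) again forces $P_C \phi(X) P_{C'} = 0$.

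Putting everything together,
$$\phi(X) \;=\; \Bigl(\sum_{C \in \ca{Q}} P_C\Bigr) \phi(X) \Bigl(\sum_{C' \in \ca{Q}} P_{C'}\Bigr) \;=\; \sum_{C \in \ca{Q}} P_C \phi(X) P_C \;=\; \sum_{C \in \ca{Q}} \omega_C(P_C X)^{\dagger_C},$$
which is the required identity. No step is particularly delicate: the essential content is that the central decomposition of $\ca{A}_\rho$ is transported through $\phi$, which follows from the centrality of the $P_C$'s together with the triple-product identity of Claim \ref{cl:Jordan triple product}, while the block-by-block identification was already carried out in Claim \ref{cl:supported in CxC}.
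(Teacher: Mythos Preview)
Your proof is correct. Both arguments rest on the same ingredients --- the centrality of the $P_C$'s, the identity $\phi(P_C)=P_C$ from \eqref{eq:identity on diagonal idempotents}, and Claim~\ref{cl:supported in CxC} --- but the paper's version is somewhat more direct. Rather than decomposing $\phi(X)$ into blocks $P_C\phi(X)P_{C'}$ and then showing the off-diagonal blocks vanish, the paper simply writes
\[
\phi(X)=\phi(X)\bullet I=\sum_{C\in\ca{Q}}\phi(X)\bullet P_C=\sum_{C\in\ca{Q}}\phi(X)\bullet\phi(P_C)=\sum_{C\in\ca{Q}}\phi(X\bullet P_C)=\sum_{C\in\ca{Q}}\phi(P_CX),
\]
and then invokes Claim~\ref{cl:supported in CxC}. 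This reaches the same conclusion in one line without invoking Claim~\ref{cl:Jordan triple product} and without any separate cross-term analysis; your approach works too, but the detour through the triple-product identity and the case split on $\bullet$ is not needed. (A minor point of exposition: you use $\phi(P_C)=P_C$ twice without saying so explicitly --- once when writing $\phi(P_CXP_C)=P_C\phi(X)P_C$ and once when writing $\phi(P_C\bullet X)=P_C\bullet\phi(X)$.)
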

		\begin{claimproof}
			Indeed,
			\begin{align*}
				\phi(X) &= \phi(X) \bullet I = \phi(X) \bullet \left(\sum_{C \in \ca{Q}}P_C\right) = \sum_{C \in \ca{Q}} (\phi(X) \bullet P_C) \\
				&\leftstackrel{\eqref{eq:identity on diagonal idempotents}}= \sum_{C \in \ca{Q}} (\phi(X) \bullet \phi(P_C)) = \sum_{C \in \ca{Q}} \phi(X \bullet P_C) = \sum_{C \in \ca{Q}} \phi(\underbrace{P_C X}_{\text{supported in $C \times C$}})\\
				&\leftstackrel{\text{Claim }\ref{cl:supported in CxC}}= \sum_{C \in \ca{Q}}  \omega_C(P_C X)^{\dagger_C}.
			\end{align*}
		\end{claimproof}
	\end{proof}
	
	\begin{remark}
		Using the setting of Theorem \ref{thm:main result}, adding the additional assumption of the continuity at a single point for the map $\phi$ ensures that all monomorphisms $\omega_C : \C \to \C$ are either the identity or the complex conjugation. However, in the absence of this continuity assumption there exist infinitely many such monomorphisms (even automorphisms) $\omega_C$.
	\end{remark}
	
	\begin{remark}
		In contrast to \cite[Theorem~4.9]{GogicTomasevic}, the injectivity assumption of the map $\phi$ in Theorem \ref{thm:main result} cannot be relaxed to the condition that only $\phi(E_{ij}) \ne 0$ for all $(i,j) \in \rho$. This is illustrated by a constant map that assigns each matrix to a fixed nonzero idempotent.
	\end{remark}
	
	The next example demonstrates that Theorem \ref{thm:main result} cannot be generalized to arbitrary unital subalgebras of $M_n$. 
	\begin{example}\label{ex:nonSMAexample}
		Consider $\ca{A}\subseteq M_5$ defined by
		\begin{equation*}
			\ca{A}:=\left\{\begin{bmatrix} x_{11} & 0 & 0 & 0 & 0\\
				x_{21} & y & z & 0 & 0 \\
				0 & 0 & x_{33} & 0 & 0 \\
				0 & 0 & z & y & x_{45} \\
				0 & 0 & 0 & 0 & x_{55}  \end{bmatrix}  : x_{ij}, y,z \in \C \right\}.
		\end{equation*}
		One can easily check that $\ca{A}$ is a central subalgebra of $M_5$. On the other hand (as in the proof of (ii) $\implies$ (i) of Theorem \ref{thm:main result}), choose any injective multiplicative non-additive function $\omega : \C \to \C$ and define a map $\phi : \ca{A}\to M_5$ by
		$$\phi\left(\begin{bmatrix} x_{11} & 0 & 0 & 0 & 0\\
			x_{21} & y & z & 0 & 0 \\
			0 & 0 & x_{33} & 0 & 0 \\
			0 & 0 & z & y & x_{45} \\
			0 & 0 & 0 & 0 & x_{55}  \end{bmatrix}\right) := \begin{bmatrix} x_{11} & 0 & 0 & 0 & 0 \\
			x_{21} & y & z & x_{45} & 0 \\
			0 & 0 & x_{33} & 0 & 0 \\
			0 & 0 & 0 & x_{55} & 0 \\
			0 & 0 & 0 & 0 & \omega(x_{11})  \end{bmatrix}.$$
		It is then straightforward to verify that $\phi$ is an injective non-additive map that is both multiplicative and Jordan multiplicative.
	\end{example}
	
	We conclude the paper with a brief discussion of a  generalization of Theorem~\ref{thm:main result} to the class of SMAs over more general fields.
	\begin{remark}\label{rem:extension to fields}
		Let $\F$ be an arbitrary field and let  $\mathcal{A}_\rho \subseteq M_n(\F)$ be an SMA. By examining the proof of \cite[Theorem 3.4]{GogicTomasevic}, one can observe that for any commuting family $\mathcal{F} \subseteq \idem(\ca{A}_\rho)$ of idempotents in $\mathcal{A}_\rho$ there exists an invertible matrix $S \in \mathcal{A}_\rho^\times$ such that $S \mathcal{F} S^{-1} \subseteq \mathcal{D}_n(\F)$.
		
		\smallskip 
		
		In particular, if the characteristic of $\F$ is not $2$, then combining the above observation with the remainder of the proof of Theorem~\ref{thm:main result} yields that the implications \textnormal{(i)} $\implies$ \textnormal{(iii)} $\implies$ \textnormal{(ii)} continue to hold for arbitrary SMAs $\mathcal{A}_\rho \subseteq M_n(\mathbb{F})$. Furthermore, the implication \textnormal{(ii)} $\implies$ \textnormal{(i)} also holds whenever $\mathbb{F}$ admits a non-additive, multiplicative, injective self-map $\omega$. For example, besides the case $\mathbb{F} = \mathbb{C}$, this condition is satisfied for $\mathbb{F} = \mathbb{R}$ or any of its subfields (one can choose $\omega(x)=x^3$).
	\end{remark}

\end{document}